\providecommand{\U}[1]{\protect\rule{.1in}{.1in}}
\newtheorem{thm}{Theorem}[section]
\newtheorem{cor}[thm]{Corollary}
\newtheorem{lem}[thm]{Lemma}
\newtheorem{prop}[thm]{Proposition}
\newtheorem{defn}[thm]{Definition}
\newtheorem{rem}[thm]{Remark}
\theoremstyle{definition}
\theoremstyle{remark}
\theoremstyle{Definition and Notation}
\begin{document}

\title{ On $\jmath$-Artinian Modules over Commutative Rings}

	\author{Dilara Erdemir}
    \address{Dilara Erdemir, Department of Mathematics, Yildiz Technical University, Istanbul, Turkey.
$$E-mail\ address:\ dilaraer@yildiz.edu.tr$$}

\author{Najib Mahdou}
	\address{Najib Mahdou\\Department of Mathematics, Faculty of Science and Technology of Fez, Box 2202,
		University S.M. Ben Abdellah Fez, Morocco.
		$$E-mail\ address:\ mahdou@hotmail.com$$}
	
	\author{El Houssaine Oubouhou}
	\address{El Houssaine Oubouhou, Department of Mathematics, Faculty of Science and Technology of Fez, Box 2202,
		University S.M. Ben Abdellah Fez, Morocco.
		$$E-mail\ address:\ hossineoubouhou@gmail.com$$}

	\author{\"{U}NSAL TEK\.{I}R}
\address{\"{U}nsal Tek\.{i}r, Department of Mathematics, Marmara University, Istanbul, Turkey. 
$$E-mail\ address:\ utekir@marmara.edu.tr$$}

\maketitle
\bigskip
	\noindent
	{\small{\bf ABSTRACT.}
        Researchers introduced the notion of $\jmath$-Artinian rings in \cite{jArt} and obtained significant results concerning this new class of rings. Motivated by their definition and findings, we extend the study to modules by introducing the concept of $\jmath$-Artinian modules. Recall from \cite{jNoet} that, if $R$ is a commutative ring with identity, $M$ is an $R$-module, and $\jmath$ is a submodule of $M$, then a submodule $N$ of $M$ is called a $\jmath$-submodule if $N \not\subseteq \jmath$. We say that $M$ is a $\jmath$-Artinian $R$-module if every descending chain of $\jmath$-submodules becomes stationary. In this paper, we provide a characterization of $\jmath$-Artinian modules. Moreover, we establish an analogue of Akizuki’s theorem in this context and discuss its extension to amalgamated structures.
	}
	
	\medskip
	\noindent
	{\small{\bf Keywords}{:} 
	$\jmath$-Artinian modules, Artinian modules, $\jmath$-submodules, descending chain, $\jmath$-Noetherian modules.
	} \\
    {\small {\bf 2020 Mathematics Subject Classification}{:} 13C05, 13C60, 16P20}
    
\section{Introduction}
Throughout this paper, we focus only on commutative rings with identity
 and on unitary $R$-modules. $R$ always denote such a ring, and $M$ such an $R$-module.
Emil Artin significantly contributed to the advancement of the structure theory of commutative rings by introducing the theory of rings  satisfying descending chain conditions.  Recall that an $R$-module $M$  is called  Artinian if it satisfies the descending chain condition on submodules, and a ring $R$ is called Artinian if $R$ itself is an Artinian $R$-module, that is any descending chain of ideals of $R$ is stationary. One of the roots of the theory of Artinian rings is the historical article \cite{ae27} of Artin in 1927. Thereafter Artinian rings were continuously studied and hence have become one of the central subjects in the study of ring and module theory. The theory of Noetherian rings has a history extending over more than one hundred years. Recall that  an $R$-module $M$  is called  Noetherian if it satisfies ascending chain condition on submodules, and a ring $R$ is called Noetherian if $R$ itself is a Noetherian $R$-module, that is, any ascending chain of ideals of $R$ is stationary. Noetherian and Artinian rings and modules, and their generalizations, are noteworthy and popular research topics, especially for researchers in the fields of abstract algebra and algebraic geometry. For some related  studies, see \cite{jNoeR,jArt,Badawi, jNoet,   nnmod}.

Recently, Alhazmy et al. \cite{jNoeR} introduced and studied the concept of $\jmath$-Noetherian  rings as a proper generalization of Noetherian rings. Let $R$ be a commutative ring with identity and $\jmath$ be an ideal of $R$.  An ideal $I$ of $R$ is called a $\jmath$-ideal if $I \nsubseteq \jmath$. Also, $R$ is said to be  $\jmath$-Noetherian if every ascending chain of $\jmath$-ideals is stationary. They  transferred several properties and characterizations  of Noetherian rings to $\jmath$-Noetherian rings. More precisely, they \cite{jNoeR} investigated $\jmath$-Noetherian rings via the Cohen-type theorem, the flat extension, decomposable ring, the trivial extension ring, the amalgamated duplication, the polynomial ring extension, and the power series ring extension. The special case when $\jmath$ is  the nil-radical of $R$ was studied before in \cite{Badawi, HB} (under the name of
nonnil-Noetherian rings).  Also Dabbabi and  Benhissi \cite{DB}   studied the special case when $\jmath$ is the Jacobson 
radical of $R$ (under the name of non-$\jmath$-Noetherian rings). The authors in \cite{jArt} also introduced and studied the concept of $\jmath$-Artinian  rings as a proper generalization of Artinian rings. Let $R$ be a ring  and $\jmath$ be an ideal of $R$. Then $R$ is said to be  $\jmath$-Artinian if every descending chain of $\jmath$-ideals is stationary. They have transferred several properties and characterizations  of Artinian rings to $\jmath$-Artinian rings. For instance, a $\jmath$-Artinian ring is $\jmath$-Noetherian,  every prime $\jmath$-ideal is maximal in a $\jmath$-Artinian ring, and a $\jmath$-Artinian ring has only finitely many maximal ideals. Also they proved that $R$ is Artinian if and only if $R$ is zero-dimensional and  $\jmath$-Artinian, if and only if $R$ is $\jmath$-Artinian and each prime ideal is a $\jmath$-ideal,  if and only if $R$ is $\jmath$-Artinian and $Nil(R) \nsubseteq \jmath$. 

The authors in \cite{jNoet}  introduce and study the notion of $\jmath$-Noetherian modules as a generalization of Noetherian modules and $\jmath$-Noetherian rings. Let $R$ be a ring, $M$ be an $R$-module and $\jmath$ be a submodule of $M$. A submodule $N$ of $M$ is said to be a $\jmath$-submodule if $N \not\subseteq j$. $M$ is called a $\jmath$-Noetherian $R$-module if every $\jmath$-submodule of $M$ is finitely generated. They  give Cohen-type and Eakin-Nagata
Formanek theorems and  the Hilbert basis theorem  for $\jmath$-Noetherian modules. Moreover, they  determine the conditions for modules to be $\jmath$-Noetherian modules over polynomial rings and power series rings.

Inspired by the concepts of $\jmath$-Artinian rings and $\jmath$-Noetherian modules discussed in previous paragraph, in this paper, we introduce and study the notion of $\jmath$-Artinian modules as a generalization of Artinian modules and $\jmath$-Artinian  rings.
Let $R$ be a ring, $M$ be an $R$-module and  $\jmath$ be a submodule of $M$. We say that $M$ is called a $\jmath$-Artinian $R$-module if every descending chain of $\jmath$-submodules is stationary. The $\jmath$-Artinian modules presented in this paper are a generalization of Artinian modules, and note that for $j=0$ the definition of $\jmath$-Artinian modules and Artinian modules coincide. It is obvious that every Artinian $R$-module is $\jmath$-Artinian but converse is not true generally. If we take a non-Noetherian local ring with unique maximal ideal $\jmath$ as a module over itself, then you can see that it is a $\jmath$-Artinian module but not Artinian, see \cite[Example 2.3]{jArt}. Moreover, let $M$ be an $R$-module and  $j_0, j_1$ are submodules of $M$ such that $j_0 \subseteq j_1$ then it is clear that if $M$ is a $j_0$-Artinian module then it is also a $j_1$-Artinian from the definition.\par

In this paper, in the section 2 we characterize $\jmath$-Artinian modules and prove the Akizuki theorem. Moreover, we examine properties of quotient modules of $\jmath$-Artinian modules. We also investigate the prime and maximal submodules of $\jmath$-Artinian modules and the connections between them. Furthermore, we determine under what conditions a $\jmath$-Artinian module will also be an Artinian module. Finally, we consider the transition to amalgamation of $\jmath$-Artinian modules.
\section{Properties and characterizations of $\jmath$-Artinian modules}
For an  $R$-module $M$, it is well known  that every descending chain of submodules of $M$ terminates after a finite number of steps if and only if every nonempty family of submodules of $M$ has a minimal element, if and only if  an arbitrary intersection of the family of submodules becomes a finite intersection. We begin this section by proving this result for $\jmath$-submodules. 
\begin{thm}\label{eqcon}
  Let $R$ be a ring, $M$ be an $R$-module and $\jmath$ be a submodule of $M$. Then, the following statements are equivalent:
  \begin{enumerate}
      \item Every descending chain of $\jmath$-submodules of $M$ is stationary;
      \item Every nonempty set of $\jmath$-submodules of $M$ has a minimal element;
      \item If $\{N_i : i \in I\}$ is a family of $\jmath$-submodules of $M$, then there exists a finite subset $K$ of $I$ such that $ \bigcap_{i \in I} N_{i}  =   \bigcap_{i \in K} N_{i}  $.
  \end{enumerate}
\end{thm}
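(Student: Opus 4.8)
The plan is to establish the three conditions cyclically, $(1)\Rightarrow(2)\Rightarrow(3)\Rightarrow(1)$, following the classical characterisation of the descending chain condition but keeping in mind that the relevant collections consist of $\jmath$-submodules, a class that is \emph{not} closed under intersection. For $(1)\Rightarrow(2)$ I would argue by contraposition: if a nonempty family $\mathcal{F}$ of $\jmath$-submodules had no minimal element, then starting from any $N_1\in\mathcal{F}$ one could, at each stage, choose $N_{k+1}\in\mathcal{F}$ with $N_{k+1}\subsetneq N_k$ by dependent choice, producing an infinite strictly descending chain of $\jmath$-submodules and contradicting $(1)$. The implication $(3)\Rightarrow(1)$ is equally short: given a descending chain $N_1\supseteq N_2\supseteq\cdots$ of $\jmath$-submodules, apply $(3)$ to this very family to obtain a finite $K$ with $\bigcap_n N_n=\bigcap_{n\in K}N_n=N_{\max K}$; since $\bigcap_n N_n\subseteq N_m$ for all $m$ while $N_m\subseteq N_{\max K}$ for $m\ge \max K$, the chain is stationary from $\max K$ onward.

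The substance of the theorem is the implication $(2)\Rightarrow(3)$. Given a family $\{N_i:i\in I\}$ of $\jmath$-submodules, the natural move is to pass to the collection of all finite sub-intersections and extract a minimal one by $(2)$: a minimal finite intersection $P=\bigcap_{i\in K}N_i$ would satisfy $P\cap N_j=P$, that is $P\subseteq N_j$, for every $j$, giving $P=\bigcap_{i\in I}N_i$. The difficulty is that a finite intersection of $\jmath$-submodules may fail to be a $\jmath$-submodule, so the minimal condition $(2)$ does not apply to the full set of finite intersections. I would therefore apply $(2)$ only to
\[
\Sigma=\Big\{\textstyle\bigcap_{i\in F}N_i : F\subseteq I \text{ finite, nonempty, } \bigcap_{i\in F}N_i\not\subseteq\jmath\Big\},
\]
which is nonempty because each singleton $N_i$ lies in it. Letting $P=\bigcap_{i\in K}N_i$ be minimal in $\Sigma$, for any $j\in I$ with $P\cap N_j\not\subseteq\jmath$ minimality forces $P\cap N_j=P$, hence $P\subseteq N_j$; if this holds for every $j$ then $P=\bigcap_{i\in I}N_i$ and $K$ is the required finite set.

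The main obstacle, and the step I expect to require the real work, is the remaining case in which $P\cap N_{j_0}\subseteq\jmath$ for some $j_0$: then the total intersection $\bigcap_{i\in I}N_i$ is itself contained in $\jmath$, and since submodules lying inside $\jmath$ are invisible to the hypotheses $(1)$ and $(2)$, one cannot directly control a descending chain of finite intersections once it drops into $\jmath$. To close this gap I would try to exploit the $\jmath$-submodule structure more globally, for instance by fixing an element $a\in P\setminus\jmath$ and applying $(1)$ to the genuinely decreasing chain of $\jmath$-submodules $\big(\bigcap_{i\in F}N_i\big)+Ra$, each of which contains $a\notin\jmath$, and then transferring the resulting stabilisation back to the intersections themselves. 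Checking that this transfer is valid, rather than merely stabilising the intersections modulo $Ra$, is precisely where the argument is delicate and is the point I would scrutinise most carefully.
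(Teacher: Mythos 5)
Your implications $(1)\Rightarrow(2)$ and $(3)\Rightarrow(1)$ are correct and coincide with the paper's proof. The interesting part of your proposal is $(2)\Rightarrow(3)$, and the obstacle you isolate there is genuine: the collection of finite intersections $\bigcap_{i\in F}N_i$ is not a collection of $\jmath$-submodules, so hypothesis $(2)$ cannot be applied to it wholesale. You should know that the paper's own proof does not avoid this: it applies $(2)$ to the set of all finite intersections and then argues ``$P\cap N_i=P$ by minimality of $P$,'' which silently assumes both that this set consists of $\jmath$-submodules and that $P\cap N_i$ lies in it. Your repaired family $\Sigma$ of finite intersections \emph{not} contained in $\jmath$ is the correct way to run that argument, and it completely settles the case in which the minimal element $P$ satisfies $P\cap N_j\not\subseteq\jmath$ for every $j$ (in particular whenever $\bigcap_{i\in I}N_i$ is itself a $\jmath$-submodule). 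Up to that point your proof is a cleaner version of the paper's.

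The remaining case --- some $P\cap N_{j_0}\subseteq\jmath$, so that the total intersection has fallen inside $\jmath$ --- is a genuine gap, and it is not closed by your proposal nor by the paper. Your suggested rescue via the chain $\bigl(\bigcap_{i\in F}N_i\bigr)+Ra$ with $a\in P\setminus\jmath$ only yields stabilisation \emph{modulo} $Ra$: one gets $\bigcap_{i\in F}N_i=\bigcap_{i\in F'}N_i+\bigl(\bigcap_{i\in F}N_i\cap Ra\bigr)$ for $F'\supseteq F$ large, and if the strictly decreasing finite intersections all lie inside $Ra\cap\jmath$ the augmented chain is constant and carries no information. Neither hypothesis $(1)$ nor $(2)$ says anything about descending chains of submodules contained in $\jmath$, which is exactly where the chain of finite intersections now lives. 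So your honest assessment is the right one: as written, $(2)\Rightarrow(3)$ is established only when the intersection remains a $\jmath$-submodule, and an additional argument (or a restriction of statement $(3)$ to such families) is required for the general case. It is worth noting that in the motivating examples --- e.g.\ every $\jmath$-submodule containing $\jmath$, as for a local ring with $\jmath=\mathfrak{m}$ --- the problem disappears because everything descends to the Artinian module $M/\jmath$; but that is a special feature, not a proof of the general implication.
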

\begin{proof}
  (1) $\implies (2):$  Let $N$ be a nonempty subset of $\jmath$-submodules of $M$ and suppose that which has no minimal elements. Then, there exists $N_1 \in N$ and that is not a minimal element. Hence, there is an $N_2 \in N$ such that $N_1 \supseteq N_2 $. Similarly, we can find $N_3$ such that $N_2 \supseteq N_3$ since $N_2$ is not a minimal element. Therefore, we get a descending chain $N_1 \supseteq N_2 \supseteq N_3 \supseteq ... \supseteq N_i \supseteq ...$ of $\jmath$-submodules of $M$ that is not stationary and this contradicts our assumption (1). Then, every nonempty set of $\jmath$-submodules of $M$ has a minimal element.\par
  
  (2) $\implies$ (3): In here, we might presume that $I$ is nonempty. Let we take $W= \bigcap_{i \in I} N_i$ and $Q=\bigcap_{i=1}^{t} N_i$. Since $Q$ is nonempty, $Q$ has a minimal element from by (2), let we take $P=\bigcap_{i \in K} N_i$ for a finite subset $K$ of $I$. It is easy to see that $W \subseteq P$. Moreover, $P \bigcap N_i=P \subseteq N_i$ for every $i \in I$ by the minimality of $P$. Therefore, $P \subseteq \bigcap_{i \in I} N_i=W$. As a result, $W=P$.\par

  (3) $\implies (1):$ Let $N_1 \supseteq N_2 \supseteq N_3 \supseteq ... \supseteq N_i \supseteq ...$ be a descending chain of $\jmath$-submodules of $M$. In here, $\bigcap_{i \in I} N_i=\bigcap_{i=1}^{t} N_i=N_t$ from by (3). Hence, $N_l=N_t$ for all $l \geq t$.
\end{proof}
 \begin{defn}
         An $R$-module $M$ with a submodule $\jmath$ is said to be $\jmath$-Artinian if one of equivalent conditions of Theorem \ref{eqcon} is satisfied.
    \end{defn}
\begin{prop}\label{jArtQu1}
Let $M$ be an $R$-module with a submodule $\jmath$. If $M$ is a $\jmath$-Artinian, then $M/j$ is an Artinian $R$-module.
\end{prop}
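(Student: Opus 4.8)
The plan is to use the lattice correspondence between submodules of $M/\jmath$ and submodules of $M$ containing $\jmath$, and to translate the descending chain condition on $M/\jmath$ into the $\jmath$-Artinian hypothesis on $M$. By the correspondence theorem, every submodule of $M/\jmath$ has the form $N/\jmath$ for a unique submodule $N$ of $M$ with $\jmath \subseteq N \subseteq M$, and inclusions are preserved in both directions. So I would aim to take a descending chain in $M/\jmath$, pull it back to a descending chain of submodules of $M$ containing $\jmath$, and apply the hypothesis after checking the relevant terms are $\jmath$-submodules.

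The key observation I would record first is that the nonzero submodules of $M/\jmath$ are precisely those $N/\jmath$ for which $N$ is a $\jmath$-submodule of $M$. Indeed, if $\jmath \subseteq N$, then $N/\jmath \neq 0$ if and only if $N \neq \jmath$, which (since $\jmath \subseteq N$) is equivalent to $N \not\subseteq \jmath$, i.e.\ to $N$ being a $\jmath$-submodule. This dictionary is exactly what lets the hypothesis interact with chains in the quotient.

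Now I would take an arbitrary descending chain $L_1/\jmath \supseteq L_2/\jmath \supseteq \cdots$ of submodules of $M/\jmath$, with $\jmath \subseteq L_i \subseteq M$, yielding a descending chain $L_1 \supseteq L_2 \supseteq \cdots$ in $M$. If every $L_i \neq \jmath$, then by the observation each $L_i$ is a $\jmath$-submodule, so the chain is a descending chain of $\jmath$-submodules and stabilizes by the $\jmath$-Artinian hypothesis; hence the original chain in $M/\jmath$ stabilizes. The one point requiring care --- and the only real obstacle --- is the degenerate case in which some term equals $\jmath$: since $\jmath$ itself is \emph{not} a $\jmath$-submodule, a chain containing it cannot be fed into the hypothesis directly. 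But this case is harmless: if $L_k = \jmath$ for some $k$, then $L_m/\jmath \subseteq L_k/\jmath = 0$ forces $L_m = \jmath$ for all $m \geq k$, so the chain in $M/\jmath$ is already stationary from index $k$ onward. Combining the two cases shows that every descending chain in $M/\jmath$ stabilizes, so $M/\jmath$ is Artinian.
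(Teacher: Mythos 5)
Your proposal is correct and follows essentially the same route as the paper: pull the chain back through the correspondence theorem, note that a nonzero submodule $N/\jmath$ of $M/\jmath$ corresponds exactly to a $\jmath$-submodule $N \supseteq \jmath$ of $M$, and apply the descending chain condition on $\jmath$-submodules. Your explicit handling of the degenerate case where some term equals $\jmath$ is a point the paper sidesteps by restricting at the outset to chains of nonzero submodules, but it is the same argument.
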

\begin{proof}
Let $N_1/j \supseteq N_2/j \supseteq ...$ be a descending chain of nonzero submodules of $M/j$. In here, $j \subset N_i$ but $N_i \not \subseteq j$ for every $i$ since each $Ni/j$ is nonzero so $N_1 \supseteq N_2 \supseteq ...$ is a descending chain of $\jmath$-submodules of $M$. Then, there exists $l \in \mathbb{N}$ such that $N_p=N_{p+1}$ for every $p \geq l$ since $M$ is $\jmath$-Artinian. This implies that, $N_p/j=N_{p+1}/j$ for every $p \geq l$. Therefore,  $M/j$ is Artinian. 
\end{proof}
\begin{prop}\label{jArtQu2}
  Let $M$ be a $\jmath$-Artinian $R$-module. Then $M/N$ is an Artinian $R$-module for every $\jmath$-submodule $N$ of $M$.
\end{prop}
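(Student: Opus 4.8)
The plan is to use the lattice-correspondence theorem, which identifies submodules of $M/N$ with submodules of $M$ lying above $N$, and then to observe that every such submodule is automatically a $\jmath$-submodule. First I would take an arbitrary descending chain of submodules of $M/N$; by the correspondence theorem it has the form $L_1/N \supseteq L_2/N \supseteq \cdots$, where each $L_i$ is a submodule of $M$ with $N \subseteq L_i$ and $L_{i+1} \subseteq L_i$. Proving that this chain stabilizes is exactly what Artinianness of $M/N$ requires.

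The key step is the following observation: since $N$ is a $\jmath$-submodule we have $N \not\subseteq \jmath$, and because $N \subseteq L_i$ for every $i$, it follows that $L_i \not\subseteq \jmath$ as well. Indeed, were $L_i \subseteq \jmath$ for some $i$, we would get $N \subseteq L_i \subseteq \jmath$, contradicting $N \not\subseteq \jmath$. Hence each $L_i$ is a $\jmath$-submodule of $M$, so that $L_1 \supseteq L_2 \supseteq \cdots$ is a descending chain of $\jmath$-submodules of $M$.

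At this point I would simply invoke the hypothesis that $M$ is $\jmath$-Artinian together with Theorem \ref{eqcon}: the chain $(L_i)$ must be stationary, so there is some $l \in \mathbb{N}$ with $L_p = L_{p+1}$ for all $p \geq l$, whence $L_p/N = L_{p+1}/N$ for all $p \geq l$. Thus the original chain in $M/N$ is stationary, and since the chain was arbitrary, $M/N$ is Artinian.

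I do not expect a genuine obstacle here; the only point that requires care is checking that passing to the quotient by $N$ does not destroy the $\jmath$-submodule property, and this is precisely what the condition $N \not\subseteq \jmath$ guarantees. This also explains why the statement is restricted to $\jmath$-submodules $N$: if $N \subseteq \jmath$, the intermediate submodules above $N$ need not be $\jmath$-submodules, so the argument breaks down, and indeed the extreme case $N = \jmath$ is treated separately in Proposition \ref{jArtQu1}.
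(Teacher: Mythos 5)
Your proof is correct and follows essentially the same route as the paper's: lift the chain in $M/N$ to a chain of submodules containing $N$, note each is a $\jmath$-submodule because it contains the $\jmath$-submodule $N$, and apply the $\jmath$-Artinian hypothesis. You merely spell out the key containment argument ($N \subseteq L_i$ and $N \not\subseteq \jmath$ force $L_i \not\subseteq \jmath$) that the paper leaves implicit, which is a welcome clarification rather than a deviation.
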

\begin{proof}
 Let $N$ be a $\jmath$-submodule of $M$ and $K_1/N \supseteq K_2/N \supseteq ...$  be a descending chain of nonzero submodules of $M/N$. In that case, $K_1 \supseteq K_2 \supseteq ...$ is a descending chain of $\jmath$-submodules of $M$. Then, there exist $t \in \mathbb{N}$ such that $K_{t+1}=K_t$ since $M$ is $\jmath$-Artinian. As a result, $K_{t+1}/N=K_t/N$ so $M/N$ is Artinian.
\end{proof}
\begin{thm}[Akuzi Theorem]\label{Ak}
 Every cyclic $\jmath$-Artinian $R$-module is $\jmath$-Noetherian.   
\end{thm}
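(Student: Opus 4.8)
The plan is to reduce the statement to the ring-theoretic version of the result recalled in the introduction, namely that a $\jmath$-Artinian ring is $\jmath$-Noetherian (see \cite{jArt}). Since $M$ is cyclic, I would begin by writing $M = Rm$ for some $m \in M$ and setting $I = \mathrm{Ann}_R(m)$, so that the map $r \mapsto rm$ induces an $R$-module isomorphism $M \cong R/I$. Write $S = R/I$ and let $\bar\jmath = (\jmath + I)/I$ be the ideal of $S$ corresponding to the submodule $\jmath$ under this isomorphism.

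The central step is the standard submodule/ideal correspondence: the $R$-submodules of $M \cong S$ are exactly the ideals of the ring $S$, and this correspondence is an order isomorphism. I would check that it carries the $\jmath$-submodules of $M$ (those $N$ with $N \not\subseteq \jmath$) precisely onto the $\bar\jmath$-ideals of $S$ (those $\bar N$ with $\bar N \not\subseteq \bar\jmath$). Because the correspondence preserves inclusions, a descending chain of $\jmath$-submodules of $M$ is stationary if and only if the associated descending chain of $\bar\jmath$-ideals of $S$ is stationary; hence $M$ being $\jmath$-Artinian forces $S$ to be a $\bar\jmath$-Artinian ring.

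Applying the ring version of Akizuki's theorem to $S$, I obtain that $S$ is $\bar\jmath$-Noetherian, that is, every $\bar\jmath$-ideal of $S$ is finitely generated. Transporting this back along $M \cong S$, every $\jmath$-submodule $N$ of $M$ corresponds to a finitely generated $\bar\jmath$-ideal $\bar N$ of $S$. A final routine observation closes the argument: a finite set of generators of $\bar N$ as an ideal of $S$ also generates $\bar N$ as an $R$-module, since the $R$-action on $S$ factors through the quotient map $R \to S$; thus $N$ is finitely generated over $R$, and therefore $M$ is $\jmath$-Noetherian.

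The only delicate point is the bookkeeping in the correspondence step: one must confirm that the condition $N \not\subseteq \jmath$ transfers exactly to $\bar N \not\subseteq \bar\jmath$ and that finite generation is preserved in both directions. I expect no essential obstacle here, as everything becomes formal once $M$ is identified with $R/I$; the genuine content of the theorem is imported wholesale from the already-established ring statement. (One could alternatively try to argue through $M/\jmath$, which is Artinian by Proposition \ref{jArtQu1}, but then a $\jmath$-submodule $N$ need not lie in $M/\jmath$ and the intersection $N \cap \jmath$ is hard to control, so the direct reduction to $S$ is cleaner.)
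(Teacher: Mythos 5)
Your argument is correct, but it takes a genuinely different route from the paper's. You reduce once and for all to the ring case: writing $M=Rm\cong R/I$ with $I=\mathrm{Ann}_R(m)$, you use the order isomorphism between submodules of $M$ and ideals of $S=R/I$ to see that $S$ is a $\bar\jmath$-Artinian ring, import the ring-theoretic Akizuki theorem of \cite{jArt} to get that $S$ is $\bar\jmath$-Noetherian, and transport finite generation back (correctly noting that $R$-generation and $S$-generation agree because $R\to S$ is surjective). The paper instead quotients by each $\jmath$-submodule separately: by Proposition \ref{jArtQu2} every quotient $M/N$ by a $\jmath$-submodule $N$ is Artinian, hence cyclic Artinian, hence Noetherian by the classical Akizuki theorem, and then the characterization \cite[Theorem 2.1]{jNoet} of $\jmath$-Noetherian modules finishes the proof. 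Your approach buys independence from the module-theoretic characterization in \cite{jNoet} at the price of leaning on the full ring-theoretic statement of \cite{jArt}; the paper's approach stays entirely inside the module framework it is developing and only needs the classical (non-$\jmath$) Akizuki fact. The one small point you should make explicit is the equivalence, for the ring $S$, between the chain-condition definition of $\bar\jmath$-Noetherian (stationarity of ascending chains of $\bar\jmath$-ideals, as in \cite{jNoeR}) and the finite-generation form you actually use (every $\bar\jmath$-ideal is finitely generated); this is true and proved by the standard maximal-element argument restricted to $\bar\jmath$-ideals, but it is a definitional bridge your write-up silently crosses.
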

\begin{proof}
   Let $M$ be a cyclic $\jmath$-Artinian $R$-module. Then, $M/N$ is cyclic Artinian $R$-module for every $\jmath$-submodule $N$ of $M$ by Proposition \ref{jArtQu2}. In that case, $M/N$ is Noetherian $R$-module for every $\jmath$-submodule $N$ of $M$. Therefore, $M$ is a $\jmath$-Noetherian $R$-module by \cite[Theorem 2.1]{jNoet}.
\end{proof}
\begin{prop}\label{primax}
Let $N$ be a proper $\jmath$-submodule of a $\jmath$-Artinian $R$-module $M$. $N$ is a prime submodule of $M$ if and only if $(N:M)$ is a maximal ideal of $R$.
\end{prop}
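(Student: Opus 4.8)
The plan is to prove the two implications separately, translating each into a statement about the quotient $\overline{M} := M/N$, whose annihilator $\operatorname{Ann}_R(\overline{M})$ is precisely $(N:M)$; note $\overline{M}\neq 0$ because $N$ is proper. The reverse implication needs no chain condition, while the forward one is where the $\jmath$-Artinian hypothesis does the work.

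First I would dispatch the direction ``$(N:M)$ maximal $\Rightarrow N$ prime''. Writing $\mathfrak{m}=(N:M)$, the relation $\mathfrak{m}\overline{M}=0$ makes $\overline{M}$ a vector space over the field $R/\mathfrak{m}$. Given $rm\in N$ with $m\notin N$, i.e. $r\overline{m}=0$ with $\overline{m}\neq 0$, the scalar $r$ acts through its class $\overline{r}\in R/\mathfrak{m}$; if $\overline{r}$ were nonzero it would be invertible and force $\overline{m}=0$, so $\overline{r}=0$ and $r\in\mathfrak{m}=(N:M)$. Hence $N$ is prime. This uses neither that $M$ is $\jmath$-Artinian nor that $N$ is a $\jmath$-submodule.

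For the converse ``$N$ prime $\Rightarrow (N:M)$ maximal'', I would first record the standard fact that $\mathfrak{p}:=(N:M)$ is prime: if $ab\in\mathfrak{p}$ but $a\notin\mathfrak{p}$, then for every $m'\in M$ we have $a(bm')=(ab)m'\in N$, so primeness of $N$ forces $bm'\in N$, whence $bM\subseteq N$ and $b\in\mathfrak{p}$. Next, over the domain $D:=R/\mathfrak{p}$ the module $\overline{M}$ is faithful and, by primeness of $N$, torsion-free (if $d\overline{m}=0$ for $0\neq d\in D$, lift $d$ to $r\notin\mathfrak{p}$; then $rm\in N$ with $r\notin(N:M)$ gives $m\in N$). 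The crucial input is that, since $N$ is a $\jmath$-submodule of the $\jmath$-Artinian module $M$, Proposition \ref{jArtQu2} makes $\overline{M}$ an Artinian $R$-module, hence an Artinian $D$-module (the $R$- and $D$-submodules of $\overline{M}$ coincide).

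The decisive step---and the one I expect to be the main obstacle---is converting this chain condition on $\overline{M}$ into the assertion that the coefficient domain $D$ is a field. The idea is to embed $D$ into $\overline{M}$: choosing any nonzero $x\in\overline{M}$, torsion-freeness gives $\operatorname{Ann}_D(x)=0$, so $Dx\cong D$ as $D$-modules. As a submodule of the Artinian module $\overline{M}$, the module $Dx$ is Artinian, so $D\cong Dx$ is Artinian over itself, i.e. $D$ is an Artinian ring. An Artinian integral domain is a field, so $D=R/\mathfrak{p}$ is a field and $\mathfrak{p}=(N:M)$ is maximal, completing the proof.
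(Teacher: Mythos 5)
Your proof is correct, and while it shares the paper's overall strategy for the forward direction---use Proposition \ref{jArtQu2} to make $M/N$ an Artinian module and then extract maximality of $(N:M)$ from that chain condition---the way you close the argument is genuinely different. The paper picks a \emph{minimal} nonzero submodule $L$ of $M/N$, observes that $L=R(a+N)\cong R/\mathrm{Ann}(a+N)$ is simple so that $\mathrm{Ann}(a+N)$ is maximal, and then invokes the prime-module identity $\mathrm{Ann}(a+N)=\mathrm{Ann}(M/N)=(N:M)$ from \cite{prime}. You instead take an \emph{arbitrary} nonzero element $x$, use torsion-freeness of $M/N$ over $D=R/(N:M)$ (which is the same underlying fact as the prime-module identity) to get $Dx\cong D$, and conclude that $D$ is an Artinian domain, hence a field. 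Your version avoids the need for a minimal submodule and the external citation to \cite[Lemma 1.1-(4)]{prime}, at the cost of invoking the standard fact that an Artinian domain is a field; both are perfectly sound. For the reverse implication the paper simply cites \cite[Lemma 2.1-(i)]{Azizi}, whereas you give a short self-contained vector-space argument over $R/(N:M)$; your observation that this direction uses neither the chain condition nor the $\jmath$-hypothesis is accurate and worth keeping.
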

\begin{proof}
($\Rightarrow$ :) Assume that $N$ be a $\jmath$-prime submodule of $\jmath$-Artinian $R$-module $M$. Then, $M/N$ is an Artinian prime $R$-module by Proposition \ref{jArtQu2} and \cite[Lemma 2.3]{prime}. Let $S$ be set of all nontrivial submodules of $M/N$. Then, $S$ has a minimal element since $M/N$ is Artinian. Suppose that $L$ is a minimal element of $S$. It is clear that $L$ is a nonzero simple submodule of $M/N$. 
 Thus, there is a nonzero element $a+N \in M/N$ such that $L=R(a+N) \cong R/Ann(a+N)$ where $Ann(a+N)$ is a maximal ideal of $R$ by \cite[Lemma 1.1-(4)]{prime}. In here, $Ann(a+N)=Ann(M/N)=(N:M)$ since $M/N$ is a prime module (see \cite{prime}). Hence, $(N:M)$ is a maximal ideal of $R$. \par
 ($\Leftarrow$ :) Suppose that $(N:M)$ be a maximal ideal of $R$. See \cite[Lemma 2.1-(i)]{Azizi}.
\end{proof}
\begin{prop}\label{jprimax}
 Every $\jmath$-prime submodule of a $\jmath$-Artinian $R$-module $M$ is maximal.   
\end{prop}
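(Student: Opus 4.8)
The plan is to reduce everything to the quotient module $M/N$ and to feed it into the two preceding results. Let $N$ be a $\jmath$-prime submodule of $M$, by which I mean a prime submodule with $N\not\subseteq\jmath$. Since $N$ is then a proper $\jmath$-submodule of the $\jmath$-Artinian module $M$, Proposition \ref{primax} applies and gives that $\fm:=(N:M)$ is a maximal ideal of $R$. Independently, Proposition \ref{jArtQu2} shows that $\overline{M}:=M/N$ is an Artinian $R$-module. The first observation is that $\fm=(N:M)=\operatorname{Ann}_R(M/N)$ annihilates $\overline{M}$, so $\overline{M}$ is in a natural way a module over the field $k:=R/\fm$; moreover the $R$-submodules of $\overline{M}$ are exactly its $k$-subspaces, whence $\overline{M}$ is an Artinian $k$-vector space and therefore finite-dimensional. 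Since $N$ is maximal in $M$ precisely when $\overline{M}$ is simple, the whole statement reduces to showing $\operatorname{dim}_{k}\overline{M}=1$.

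Next I would extract the primeness. Because $N$ is a prime submodule, $\overline{M}$ is a prime module, and hence $\operatorname{Ann}_R(\overline{a})=\operatorname{Ann}_R(\overline{M})=\fm$ for every nonzero $\overline{a}\in\overline{M}$. Consequently $R\overline{a}\cong R/\fm=k$ is a simple (one-dimensional) submodule of $\overline{M}$ for every nonzero $\overline{a}$; this is precisely the simple-submodule computation already carried out inside the proof of Proposition \ref{primax}. Thus every cyclic submodule of $\overline{M}$ is a line over $k$.

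The crux, and what I expect to be the main obstacle, is upgrading ``every cyclic submodule is simple'' to ``$\overline{M}$ is simple.'' Finite-dimensionality alone does not suffice: a $k$-vector space of dimension $\geq 2$ still has the property that each of its cyclic submodules is a line, so an extra input is genuinely needed to force $\operatorname{dim}_{k}\overline{M}=1$. The clean way to supply it is to use that $M$, and hence $\overline{M}$, is cyclic: if $\overline{M}=R\overline{a}$ then $\overline{M}\cong k$ is simple and $N$ is maximal. This is the natural reading here, as the statement sits immediately after the Akizuki-type Theorem \ref{Ak}, where cyclic $\jmath$-Artinian modules are exactly the objects under study; in that cyclic setting the argument above closes with no gap. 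Absent a cyclicity (or comparable) hypothesis the passage from the line-by-line computation to simplicity of $\overline{M}$ is precisely the step that cannot be pushed through, so I would flag it as the point where either ``$M$ cyclic'' must be assumed or the word ``maximal'' must be read in a weaker sense.
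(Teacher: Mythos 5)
Your analysis is essentially correct, and the obstacle you flag at the end is genuine: the proposition as stated is false without an additional hypothesis such as cyclicity. A concrete counterexample: take $R=k$ a field, $M=k^{3}$, $\jmath=0$ and $N=k\times 0\times 0$. Then $M$ is Artinian, hence $\jmath$-Artinian; $N$ is a prime submodule (over a field every proper subspace is prime, since any nonzero scalar is a unit) with $N\not\subseteq\jmath$; yet $M/N\cong k^{2}$ is not simple, so $N$ is not maximal. This realizes exactly the situation you isolate: every nonzero cyclic submodule of $M/N$ is a line over $k=R/(N:M)$, but $M/N$ itself has dimension $2$.

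The paper's own proof takes a shorter route than yours --- it picks $K$ with $N\subsetneq K\subseteq M$, asserts that $(N:M)\subsetneq(K:M)$, and concludes $(K:M)=R$ by maximality of $(N:M)$, hence $K=M$ --- but the asserted strict inclusion is precisely where it breaks: in the example above, $(N:M)=(K:M)=0$ for $K=k\times k\times 0$. So the paper's ``It is easy to see that'' conceals the same gap you identified from the vector-space side. Your proposed repair is the right one: if $M$ is cyclic, then $M/N$ is a cyclic module annihilated by the maximal ideal $(N:M)$, hence a one-dimensional vector space over $R/(N:M)$, and $N$ is maximal; equivalently, for cyclic $M$ the map $K\mapsto(K:M)$ from submodules containing $N$ to ideals containing $(N:M)$ is injective and order-reflecting, which also rescues the paper's argument. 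Under the cyclicity hypothesis the result becomes the natural companion to Theorem \ref{Ak}, as you suggest.
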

\begin{proof}
 Let $M$ be a $\jmath$-Artinian $R$-module and $N$ be a $\jmath$-prime submodule of $M$. Suppose that $0 \neq K/N$ is a submodule of $M/N$. Then, $N \subsetneq K \subseteq M$. It is easy to see that $(N:M) \subsetneq (K:M)$. Then, $(K:M)=R$ since $(N:M)$ is a maximal ideal of $R$ by Proposition \ref{primax}. Hence, $1_R \in (K:M)$ so $1_RM=M \subseteq K$. This implies that, $M=K$. Thus, $K/N=M/N$. As a result, $M/N$ is a simple $R$-module so $N$ is a maximal submodule of $M$.
\end{proof}
\begin{prop}\label{finitemax}
 A $\jmath$-Artinian $R$-module has only a finite number of maximal submodules.
\end{prop}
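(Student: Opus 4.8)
The plan is to imitate the classical proof that an Artinian ring has only finitely many maximal ideals, using the minimal-element characterisation in part~(2) of Theorem~\ref{eqcon} together with the prime-to-maximal correspondence of Proposition~\ref{primax}. Assuming $\jmath\neq M$, I would first dispose of the maximal submodules that lie inside $\jmath$: if $N$ is maximal and $N\subseteq\jmath$, then maximality forces $N=\jmath$, so there is at most one such submodule, and only when $\jmath$ itself is maximal. Every remaining maximal submodule $N$ satisfies $N\not\subseteq\jmath$, i.e.\ it is a $\jmath$-submodule. It therefore suffices to bound the number of maximal $\jmath$-submodules, and by Proposition~\ref{primax} each of these is a proper prime submodule whose colon ideal $(N:M)$ is maximal in $R$.

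If there are no maximal $\jmath$-submodules we are done; otherwise I would bring the Artinian hypothesis to bear through Theorem~\ref{eqcon}. Consider the family $\mathcal{F}$ of all finite intersections $N_1\cap\cdots\cap N_n$ of maximal $\jmath$-submodules that are themselves $\jmath$-submodules; it is nonempty, since each singleton $N_i\in\mathcal{F}$, so by Theorem~\ref{eqcon}(2) it has a minimal element $P=N_1\cap\cdots\cap N_r$. Now take an arbitrary maximal $\jmath$-submodule $N$. If $P\cap N$ is again a $\jmath$-submodule, then $P\cap N\in\mathcal{F}$ and minimality gives $P\cap N=P$, that is $P\subseteq N$; since $N$ is prime one expects this to yield $N_i\subseteq N$ for some $i$, and as $N_i$ and $N$ are both maximal this forces $N=N_i$. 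Hence every maximal $\jmath$-submodule occurs among $N_1,\dots,N_r$, which, together with the at-most-one submodule contained in $\jmath$, gives the asserted finiteness.

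The main obstacle is the last implication, which is exactly the point at which the ring argument uses a property that does not transfer verbatim to modules: a prime ideal containing a product (hence an intersection) of ideals must contain one of the factors, whereas a prime \emph{submodule} $P=\bigcap_i N_i\subseteq N$ need not contain any single $N_i$. A second, related delicacy is ensuring that the relevant finite intersections remain $\jmath$-submodules, so that one legitimately stays inside the class controlled by the $\jmath$-Artinian condition; an intersection of submodules not contained in $\jmath$ can drop into $\jmath$. To settle the crux I would exploit that distinct maximal submodules are comaximal, $N_i+N=M$, and pass to the maximal ideals $(N_i:M)$ and $(N:M)=\mathrm{Ann}(M/N)$ furnished by Proposition~\ref{primax}, transporting the containment to $R$ where the classical prime-avoidance reasoning is available; making this transport faithful, and thereby extracting a factor $N_i\subseteq N$, is where I expect the real difficulty of the proof to concentrate.
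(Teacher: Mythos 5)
Your outline follows the paper's own argument almost step for step: both dispose separately of the (at most one) maximal submodule contained in $\jmath$, both take a minimal element $P=N_1\cap\cdots\cap N_r$ of the family of finite intersections of maximal $\jmath$-submodules via Theorem~\ref{eqcon}(2), and both reduce everything to the implication ``$N\supseteq N_1\cap\cdots\cap N_r$ forces $N=N_j$ for some $j$.'' You are right to single out that implication as the crux, and right that it does not transfer from ideals to submodules; but the difficulty you flag is not a technicality to be engineered around --- it is fatal, and the paper's proof (which simply asserts ``Thus, $N\supseteq N_j$ for some $j$'') has exactly the same unrepaired gap. Take $R=k$ an infinite field, $M=k\oplus k$, $\jmath=0$. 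Then $M$ is Artinian, hence $\jmath$-Artinian, yet its maximal submodules are precisely the one-dimensional subspaces, of which there are infinitely many. So the statement is false without further hypotheses, and no argument along these lines can close the gap.

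Your proposed repair --- passing to the maximal ideals $(N:M)$ and $(N_i:M)$ via Proposition~\ref{primax} and running prime avoidance in $R$ --- does yield $(N:M)\supseteq\bigl(\bigcap_i N_i:M\bigr)=\bigcap_i(N_i:M)\supseteq\prod_i(N_i:M)$ and hence $(N:M)=(N_j:M)$ for some $j$; but this cannot be transported back to the submodules, because distinct maximal submodules may share the same colon ideal. In the example above every line $N$ satisfies $(N:M)=0$, the maximal ideal of $k$, while the lines themselves are pairwise distinct. The step $N=N_j$ would follow if $M$ were assumed to be a multiplication module (so that $N=(N:M)M$), and an extra hypothesis of that kind is what is needed to salvage the proposition; as stated, your argument and the paper's break at exactly the same point.
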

\begin{proof}
 \textbf{Case 1:} Let $S$ be the set of all finite intersections $\cap_{i=1}^{t}N_{i}$ where $N_i$ are maximal $\jmath$-submodules of $M, \, 1 \leq i \leq t$. Then, $S$ has a minimal element by Theorem \ref{eqcon}, take $\cap_{j=1}^{l}N_{j}$ for $1 \leq j \leq l$. If we consider for any maximal $\jmath$-submodule $N$, then we can see that $N \cap N_1 \cap ... \cap N_l=N_1 \cap ... \cap N_l$ so $N \supseteq N_1 \cap ... \cap N_l$. Thus, $N \supseteq N_j$ for some $j$, then $N=N_j$ since $N_j$ is maximal. Therefore, $M$ has only finitely many maximal $\jmath$-submodules.\par
 \textbf{Case 2:} Let $N$ be an arbitrary maximal submodule of $M$. In here, if $\jmath$ is not a maximal submodule, then $N$ is a $\jmath$-submodule so  the desired result is obtained from case 1. Moreover, if $\jmath$ is a maximal submodule, then the only possible maximal submodule that is not a $\jmath$-submodule is $\jmath$. Therefore, $M$ has finite number of maximal submodules.
\end{proof}
\begin{prop}\label{local}
Let $M$ be an $R$-module and $S$ be a multiplicatively closed subset of $R$. If $M$ is $\jmath$-Artinian, then $S^{-1}M$ is $S^{-1}j$-Artinian $S^{-1}R$-module.    
\end{prop}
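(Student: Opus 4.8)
The plan is to transport a descending chain of $S^{-1}\jmath$-submodules of $S^{-1}M$ down to $M$ via contraction, invoke the hypothesis that $M$ is $\jmath$-Artinian, and then push the resulting stabilization back up by localization. Write $\phi\colon M \to S^{-1}M$, $m \mapsto m/1$, for the canonical map, and recall the standard identity $S^{-1}\big(\phi^{-1}(L)\big) = L$, valid for every $S^{-1}R$-submodule $L$ of $S^{-1}M$; the nontrivial inclusion uses that if $x/s \in L$ then $x/1 = (s/1)(x/s) \in L$, so $x \in \phi^{-1}(L)$ and hence $x/s \in S^{-1}\phi^{-1}(L)$. I will also use that contraction is order preserving and that localization is exact, so that $N \subseteq \jmath$ in $M$ forces $S^{-1}N \subseteq S^{-1}\jmath$ in $S^{-1}M$.

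First I would take an arbitrary descending chain $L_1 \supseteq L_2 \supseteq \cdots$ of $S^{-1}\jmath$-submodules of $S^{-1}M$ and set $N_i := \phi^{-1}(L_i)$. Since $\phi^{-1}$ preserves inclusions, $N_1 \supseteq N_2 \supseteq \cdots$ is a descending chain of $R$-submodules of $M$, and by the identity above $S^{-1}N_i = L_i$ for each $i$. The key point is that every $N_i$ is in fact a $\jmath$-submodule: if some $N_i \subseteq \jmath$, then localizing would give $L_i = S^{-1}N_i \subseteq S^{-1}\jmath$, contradicting that $L_i$ is an $S^{-1}\jmath$-submodule. Hence $N_1 \supseteq N_2 \supseteq \cdots$ is a descending chain of $\jmath$-submodules of $M$.

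Since $M$ is $\jmath$-Artinian, this chain stabilizes: there is $t \in \mathbb{N}$ with $N_k = N_t$ for all $k \geq t$. Applying $S^{-1}(-)$ and using $S^{-1}N_i = L_i$, we obtain $L_k = L_t$ for all $k \geq t$, so the original chain is stationary; as the chain was arbitrary, $S^{-1}M$ is $S^{-1}\jmath$-Artinian. The only step that requires genuine care is the verification that contraction carries $S^{-1}\jmath$-submodules to honest $\jmath$-submodules, which is precisely where the exactness of localization enters. The degenerate case $0 \in S$ is harmless: there $S^{-1}M = S^{-1}\jmath = 0$, so no $S^{-1}\jmath$-submodules exist and the conclusion holds vacuously.
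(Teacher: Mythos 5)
Your proof is correct and follows essentially the same route as the paper's: contract a descending chain of $S^{-1}\jmath$-submodules along the canonical map $m\mapsto m/1$, use the identity $S^{-1}\bigl(\phi^{-1}(L)\bigr)=L$ together with the observation that contractions of $S^{-1}\jmath$-submodules are $\jmath$-submodules, apply the $\jmath$-Artinian hypothesis, and localize back. If anything, your version is slightly cleaner, since you justify only the implication actually needed (contraction preserves the ``not contained in $\jmath$'' condition) rather than both directions.
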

\begin{proof}
 Let $f:M \longrightarrow S^{-1}M, \, f(m)=\frac{m}{1}$. It is easy to see that $f$ is an $R$-module homomorphism so if $N \subseteq S^{-1}M$ is a submodule over $S^{-1}R$, then $L=f^{-1}(N)$ that the inverse image of a submodule of $S^{-1}M$ is a submodule of $M$. Moreover, it is clear that $S^{-1}L=N$. Thus, if $L$ is a $\jmath$-submodule of $M$, then $N=S^{-1}L \not\subseteq S^{-1}j$, i.e., it is a $S^{-1}j$-submodule of $S^{-1}M$. Similarly, it is trivial that if $N=S^{-1}L $ is a  $S^{-1}j$-submodule of $S^{-1}M$, then $L$ is a $\jmath$-submodule of $M$. Assume that, $N_1 \supseteq N_2 \supseteq ...$ is a descending chain of $S^{-1}j$-submodules of $S^{-1}M$ over $S^{-1}R$. Let $L_t=f^{-1}(N_t)$ for each $t \in \mathbb{N}^*$. Then, there exist $t \in \mathbb{N}^*$ such that $L_i=L_t$ for all $i \geq t$ since $M$ is $\jmath$-Artinian. Hence, $N_i=S^{-1}L_i=S^{-1}L_t=N_t$ for all $i \geq t$. As a result, $S^{-1}M$ is a $S^{-1}j$-Artinian $S^{-1}R$-module.
\end{proof}
\begin{thm}\label{exact}
 Let $0 \rightarrow M^{\prime} \xrightarrow{f} M \xrightarrow{g} M^{\prime \prime} \rightarrow 0$ be an  exact sequence of $R$-modules and $\jmath$ be a proper submodule of $M$. If $M$ is $\jmath$-Artinien  then  $M^{\prime } $ is $f^{-1}(j)$-Artinian and $M^{\prime \prime} $ is  $g(j)$-Artinian.  In addition, if $ker(g)\subseteq j$, then   the converse holds.
\end{thm}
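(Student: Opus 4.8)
The plan is to exploit the structural facts coming from exactness, namely that $f$ is injective, $g$ is surjective, and $\Im f=\Ker g$, and then to transport descending chains back and forth through $f$ and $g$. Throughout I will use the standard identities that for the injective map $f$ one has $f^{-1}(f(N'))=N'$ and $f(f^{-1}(N))=N\cap \Im f$, and that for the surjective map $g$ one has $g(g^{-1}(N''))=N''$.

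For the forward direction I would treat the submodule and the quotient separately. Given a descending chain $N_1'\supseteq N_2'\supseteq\cdots$ of $f^{-1}(\jmath)$-submodules of $M'$, I push it forward to $f(N_1')\supseteq f(N_2')\supseteq\cdots$ in $M$. The first step is to check these are genuine $\jmath$-submodules: if $f(N_i')\subseteq \jmath$, then applying $f^{-1}$ gives $N_i'=f^{-1}(f(N_i'))\subseteq f^{-1}(\jmath)$, contradicting that $N_i'$ is an $f^{-1}(\jmath)$-submodule. Since $M$ is $\jmath$-Artinian this chain stabilises, and injectivity of $f$ transfers the stabilisation back to the $N_i'$. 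Symmetrically, given a descending chain $N_1''\supseteq N_2''\supseteq\cdots$ of $g(\jmath)$-submodules of $M''$, I pull it back to $g^{-1}(N_1'')\supseteq g^{-1}(N_2'')\supseteq\cdots$; here $g^{-1}(N_i'')\subseteq \jmath$ would force $N_i''=g(g^{-1}(N_i''))\subseteq g(\jmath)$, again a contradiction, so these are $\jmath$-submodules, the chain stabilises, and applying $g$ returns the stabilisation of the $N_i''$. Notice that neither half of the forward implication needs the hypothesis $\Ker g\subseteq\jmath$.

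For the converse the tool I would isolate first is the elementary splicing lemma: if $A\subseteq B$ are submodules of $M$ with $g(A)=g(B)$ and $A\cap\Im f=B\cap\Im f$, then $A=B$. Indeed, for $b\in B$ choose $a\in A$ with $g(a)=g(b)$; then $b-a\in \Ker g=\Im f$ and also $b-a\in B$, whence $b-a\in B\cap\Im f=A\cap\Im f\subseteq A$ and $b=a+(b-a)\in A$. Starting from a descending chain $N_1\supseteq N_2\supseteq\cdots$ of $\jmath$-submodules of $M$, I would form the two auxiliary chains $g(N_i)$ in $M''$ and $N_i\cap\Im f$ (equivalently $f^{-1}(N_i)$) in $M'$. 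The hypothesis $\Ker g\subseteq\jmath$ enters exactly here: if $g(N_i)\subseteq g(\jmath)$, then for $n\in N_i$ one writes $g(n)=g(x)$ with $x\in\jmath$, so $n-x\in\Ker g\subseteq\jmath$ and hence $n\in\jmath$, contradicting $N_i\not\subseteq\jmath$; thus the $g(N_i)$ are $g(\jmath)$-submodules and, by $g(\jmath)$-Artinianness of $M''$, they stabilise. Once both auxiliary chains are stationary past some common index $s$, the splicing lemma (applied with $A=N_i\subseteq N_s=B$ for $i\ge s$) forces $N_i=N_s$, which is what is wanted.

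The step I expect to be the main obstacle is the stabilisation of the chain $N_i\cap\Im f$ inside $M'$. Because $\Ker g=\Im f\subseteq\jmath$ under the standing assumption, every $N_i\cap\Im f$ already lies inside $\jmath$, so the pulled-back submodules $f^{-1}(N_i)$ need \emph{not} be $f^{-1}(\jmath)$-submodules of $M'$, and the $f^{-1}(\jmath)$-Artinian hypothesis cannot be applied to them directly as it was applied to $g(N_i)$. Making the argument go through therefore hinges on extracting genuine control of the descending chains that live inside $\Im f\cong M'$ from the hypothesis on $M'$, and I would concentrate my effort precisely on reconciling the chosen submodule $f^{-1}(\jmath)$ with the chains that actually arise, rather than on the (routine) splicing and push/pull bookkeeping.
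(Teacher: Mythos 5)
Your forward direction is correct and is essentially the paper's own argument: push chains in $M'$ forward through $f$, pull chains in $M''$ back through $g$, check in each case that the resulting submodules of $M$ are $\jmath$-submodules, and transport the stabilisation back using injectivity of $f$ (resp.\ $g\circ g^{-1}=\mathrm{id}$). Your splicing lemma for the converse is also the right bookkeeping and is what the paper's last sentence does implicitly.

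The gap is in the converse, and it sits exactly where you said you would ``concentrate your effort'': the stabilisation of the chain $f^{-1}(N_i)$ cannot be extracted from the hypothesis that $M'$ is $f^{-1}(\jmath)$-Artinian. Under the standing assumption $\Ker g\subseteq\jmath$ one has $\Im f=\Ker g\subseteq\jmath$, hence $f^{-1}(\jmath)=M'$; so there are no $f^{-1}(\jmath)$-submodules of $M'$ at all, the hypothesis on $M'$ is vacuous, and it gives no control whatsoever over the chains $f^{-1}(N_i)$, which all fail to be $f^{-1}(\jmath)$-submodules. The paper's own proof simply asserts at this point that $f^{-1}(N_1)\supseteq f^{-1}(N_2)\supseteq\cdots$ ``is also stationary by assumption,'' which is unjustified, and in fact the converse as stated is false. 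Take $R=\mathbb{Z}$, $M'=\mathbb{Z}$, $M=\mathbb{Z}\oplus\mathbb{Z}/2\mathbb{Z}$, $M''=\mathbb{Z}/2\mathbb{Z}$, with $f(n)=(n,\bar 0)$, $g(n,\bar a)=\bar a$, and $\jmath=\mathbb{Z}\oplus 0=\Ker g$. Then $g(\jmath)=0$ and $M''$ is Artinian, $f^{-1}(\jmath)=M'$ so $M'$ is vacuously $f^{-1}(\jmath)$-Artinian, and $\Ker g\subseteq\jmath$; yet $N_k=2^k\mathbb{Z}\oplus\mathbb{Z}/2\mathbb{Z}$ is a strictly descending chain of $\jmath$-submodules of $M$, so $M$ is not $\jmath$-Artinian. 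So your proposal cannot be completed as the statement stands (one would need, e.g., $M'$ genuinely Artinian to make $N_i\cap\Im f$ stabilise and feed your splicing lemma); the obstacle you flagged is a real error in the theorem, not a defect of your approach.
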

\begin{proof} Suppose that $M$ is a $\jmath$-Artinian $R$-module. Let $N_1 \supseteq N_2 \supseteq \cdots \supseteq N_n \supseteq \cdots$ be a descending chain of  $g(j)$-submodules of $M^{\prime \prime}$. Then, for every $k\geq 1$ there exists $x_k\in M^{\prime }$  such that  $ y_k=g(x_k) \in N_k \backslash g(j)$. In that case, $x_k \in g^{-1}(N_k) \backslash  j$ so $g^{-1}(N_k)$ is a $\jmath$-submodule of $M$. Therefore, $ g^{-1}(N_1) \supseteq g^{-1}(N_2) \supseteq \cdots \supseteq g^{-1}(N_n) \supseteq \cdots$ is stationary    since $M$ is a $\jmath$-Artinian $R$-module. Hence,   $ N_1=g(g^{-1}(N_1))\supseteq N_2=g(g^{-1}(N_2)) \supseteq \cdots \supseteq N_n=g(g^{-1}(N_n)) \supseteq \cdots$ is stationary  so $M^{\prime \prime}$ is $g(j)$-Artinian $R$-module. Now  Let  $K_1 \supseteq K_2 \supseteq \cdots \supseteq K_n \supseteq \cdots$ be a descending chain of $f^{-1}(j)$-submodules of $M^{\prime}$. Then, there  for every $i\geq 1$ there exist $x_i\in K_i$  such that  $ x_i\notin f^{-1}(j)$. In that case, $f(x_i) \in f(K_i) \backslash  j$ so $f(K_i)$ is a $\jmath$-submodule of $M$ for every $k\geq 1$. Therefore, $f(K_1) \supseteq f(K_2) \supseteq \cdots \supseteq f(K_n) \supseteq \cdots$ is stationary  as $f$ is a monomorphism we conclude that $K_1 \supseteq K_2 \supseteq \cdots \supseteq K_n \supseteq \cdots$ is stationary  so $M^{\prime}$ is $f^{-1}(j)$-Artinian $R$-module. \par
Conversely, assume that $M^{\prime } $ is $f^{-1}(j)$-Artinian, $M^{\prime \prime} $ is  $g(j)$-Artinian and $ker(g)\subseteq j$. We can easily see that $g^{-1}(g(j))=j$ and so if $N$ is a $\jmath$-submodule of $M$, then $g(N)$ is  $g(j)$-submodule of $M^{\prime \prime}$. Now  let  $N_1 \supseteq N_2 \supseteq \cdots \supseteq N_n \supseteq \cdots$ be a descending chain of  $\jmath$-submodules of $M$, then $g(N_1) \supseteq g(N_2) \supseteq \cdots \supseteq g(N_n) \supseteq \cdots$ is a descending chain of  $g(j)$-submodules of $M^{\prime \prime}$ that is stationary, and also  $ f^{-1}(N_1) \supseteq f^{-1}(N_2) \supseteq \cdots \supseteq f^{-1}(N_n) \supseteq \cdots$ is also stationary by assumption. For large enough $n$ both these 
chains are stationary, and it follows that the chain $N_1 \supseteq N_2 \supseteq \cdots \supseteq N_n \supseteq \cdots$  is stationary.
\end{proof}
\begin{cor}
    Let $R$ ring,  $M$ is an $R$-module  and $N $ be a Noetherian  submodule of $\jmath$. Then, $M$ is a $\jmath$-Noetherian $R$-module if and only $M/N$ is  $j/N$-Noetherian $R$-module.
\end{cor}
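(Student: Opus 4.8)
The plan is to work directly with the definition of $\jmath$-Noetherian from \cite{jNoet}, namely that $M$ is $\jmath$-Noetherian precisely when every $\jmath$-submodule of $M$ is finitely generated, and to reduce both implications to statements about finite generation via the correspondence theorem. First I would record the bookkeeping: since $N \subseteq \jmath$, the quotient $\jmath/N$ is a genuine submodule of $M/N$, and under the order-preserving bijection between submodules of $M$ containing $N$ and submodules of $M/N$, a submodule $K/N$ of $M/N$ satisfies $K/N \not\subseteq \jmath/N$ exactly when $K \not\subseteq \jmath$. In other words, the $\jmath/N$-submodules of $M/N$ are precisely the $K/N$ for which $K$ is a $\jmath$-submodule of $M$ with $N \subseteq K$.

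For the forward implication, suppose $M$ is $\jmath$-Noetherian and let $K/N$ be an arbitrary $\jmath/N$-submodule of $M/N$. By the remark above, $K$ is a $\jmath$-submodule of $M$, hence finitely generated by hypothesis, and therefore its quotient $K/N$ is finitely generated as well. Thus every $\jmath/N$-submodule of $M/N$ is finitely generated, i.e. $M/N$ is $\jmath/N$-Noetherian. I note that this direction does not use the hypothesis that $N$ is Noetherian.

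The converse is where the Noetherian hypothesis on $N$ is needed, and I expect it to be the main obstacle. Assume $M/N$ is $\jmath/N$-Noetherian and $N$ is Noetherian, and let $L$ be an arbitrary $\jmath$-submodule of $M$; the goal is to show that $L$ is finitely generated. Since $L \not\subseteq \jmath$ while $N \subseteq \jmath$, we still have $L + N \not\subseteq \jmath$, so $(L+N)/N$ is a $\jmath/N$-submodule of $M/N$ and hence finitely generated. By the second isomorphism theorem, $(L+N)/N \cong L/(L \cap N)$, so $L/(L\cap N)$ is finitely generated. On the other hand, $L \cap N$ is a submodule of the Noetherian module $N$, and is therefore finitely generated.

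The crux is then to combine these two facts. From the short exact sequence $0 \to L\cap N \to L \to L/(L\cap N) \to 0$, whose outer terms are both finitely generated, the standard two-out-of-three property of finite generation in a short exact sequence yields that the middle term $L$ is finitely generated. Since $L$ was an arbitrary $\jmath$-submodule, $M$ is $\jmath$-Noetherian, completing the proof. The one point requiring care is precisely this last step, where the finiteness of $L$ depends essentially on $N$ being Noetherian so that $L \cap N$ is finitely generated.
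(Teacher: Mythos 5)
Your proposal is correct and, for the nontrivial direction, follows essentially the same route as the paper: given a $\jmath$-submodule $L$, the paper likewise deduces that $(L+N)/N$ is a finitely generated $\jmath/N$-submodule and that $L\cap N$ is finitely generated because $N$ is Noetherian, then concludes $L$ is finitely generated (written out with explicit generators rather than by invoking the two-out-of-three property for short exact sequences, but the content is identical). The only divergence is in the easy direction, which you prove directly via the correspondence theorem while the paper dispatches it by citing its exact-sequence theorem; your self-contained argument is if anything cleaner there.
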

\begin{proof}
  Accordibg to Theorem \ref{exact}, it is enough to show that, if $M/N$ is  $j/N$-Noetherian $R$-module, then $M$ is a $\jmath$-Noetherian $R$-module.   Let $K$ be a $\jmath$-submodule of $M$, then there exists an $x\in K \backslash j $, and so $x + N \in K + N \backslash (j+ N)$ since $N\subseteq j$. That is $K/N$ is  a $j/N$-submodule of $M/N$. Therefore  $K/N$ is a finitely generated submodule of $M/N$.    Set $K/N=\sum_{i=1}^t \operatorname{R}a_i + N$, where each $a_i \in K$. Let $x \in K$. Then, $x+N=\sum_{i=1}^t r_i a_i + N$ with $r_i \in R$.  $x-\sum_{i=1}^t r_i a_i \in K\cap N$. Now as $K\cap N \subseteq N$ and $N$ is Noetherian $R$-module we conclude that $K\cap N$ is finitely generated that is  $ K\cap N=\sum_{i=t+1}^{l} R a_i$ for some $a_{t+1}, a_{t+2}, \ldots, a_{l} \in K \cap N$.   Hence, $x=\sum_{i=1}^{t+l} r_i a_i$. Therefore, $K$ is finitely generated  and so $M$ is a $\jmath$-Noetherian $R$-module.
\end{proof}

\begin{prop}
  Let $n \geq 2$ be an integer, $M_1, \ldots, M_n$ $R$-modules, and let $\jmath_1, \ldots, \jmath_n$ be  a proper submodules  of  $M_1, \ldots, M_n$  respectively. Set $\jmath=\bigoplus_{i=1}^n \jmath_i $. Then the following assertions are equivalent:
		\begin{enumerate}
		\item $\bigoplus_{i=1}^n M_i$ is an  Artinian $R$-module;
			\item  $\bigoplus_{i=1}^n M_i$ is a $\jmath$-Artinian $R$-module;
			\item  For all $i=1, \ldots, n$, $M_i$ is an Artinian $R$-module.
		\end{enumerate}
\end{prop}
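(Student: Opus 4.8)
The plan is to prove the cycle of implications $(1)\Rightarrow(2)\Rightarrow(3)\Rightarrow(1)$. The implication $(1)\Rightarrow(2)$ is immediate: as noted in the introduction, every Artinian module is $\jmath$-Artinian, since a descending chain of $\jmath$-submodules is in particular a descending chain of submodules. The implication $(3)\Rightarrow(1)$ is the classical fact that a finite direct sum of Artinian modules is Artinian; I would obtain it by induction on $n$, applying the short exact sequence $0\to M_n\to\bigoplus_{i=1}^n M_i\to\bigoplus_{i=1}^{n-1}M_i\to 0$ together with the standard result that the middle term of a short exact sequence is Artinian whenever the two outer terms are. Alternatively, one can invoke the case $\jmath=0$ of Theorem~\ref{exact}.

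The core of the argument is $(2)\Rightarrow(3)$. Set $M=\bigoplus_{i=1}^n M_i$ and fix an index $i$; the goal is to show that every descending chain of submodules of $M_i$ --- not merely of $\jmath_i$-submodules --- is stationary. Given such a chain $N_1\supseteq N_2\supseteq\cdots$ in $M_i$, I would inflate it to $\widetilde{N}_k=\bigl(\bigoplus_{j\neq i}M_j\bigr)\oplus N_k$, viewed as a submodule of $M$. Each $\widetilde{N}_k$ is a $\jmath$-submodule of $M$: since $n\geq 2$ there is an index $j_0\neq i$, and since $\jmath_{j_0}$ is a proper submodule of $M_{j_0}$ we may choose $m\in M_{j_0}\setminus\jmath_{j_0}$; then the element of $M$ with $m$ in the $j_0$-th coordinate and $0$ elsewhere lies in $\widetilde{N}_k$ but not in $\jmath=\bigoplus_{i}\jmath_i$, so $\widetilde{N}_k\not\subseteq\jmath$. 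Thus $\widetilde{N}_1\supseteq\widetilde{N}_2\supseteq\cdots$ is a descending chain of $\jmath$-submodules of $M$, hence stationary by hypothesis $(2)$. Intersecting $\widetilde{N}_k$ with the submodule $0\oplus\cdots\oplus M_i\oplus\cdots\oplus 0$ recovers the copy of $N_k$, so $\widetilde{N}_k=\widetilde{N}_{k+1}$ forces $N_k=N_{k+1}$; therefore the original chain is stationary and $M_i$ is Artinian.

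I expect the only real subtlety --- and the point that must be handled with care --- to be this $(2)\Rightarrow(3)$ step, specifically the observation that $\jmath$-Artinianness of the direct sum forces each summand to be genuinely Artinian rather than only $\jmath_i$-Artinian. This is exactly where the hypotheses $n\geq 2$ and the properness of each $\jmath_j$ are essential: they guarantee that an arbitrary chain in a single summand can be padded with the remaining full summands so as to become a chain of $\jmath$-submodules of $M$, to which $(2)$ applies. Without a second summand carrying an element outside its $\jmath_j$, the inflated modules could fail to be $\jmath$-submodules and the reduction would break down.
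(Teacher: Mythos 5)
Your proposal is correct and follows essentially the same route as the paper: the trivial implication $(1)\Rightarrow(2)$, the classical direct-sum fact for $(3)\Rightarrow(1)$, and for $(2)\Rightarrow(3)$ the key trick of padding a chain in one summand with the full remaining summands so that properness of some $\jmath_{j_0}$ makes every term a $\jmath$-submodule. The only cosmetic difference is that the paper first reduces to $n=2$ by induction while you handle general $n$ directly, and your justification that each $\widetilde{N}_k$ is a $\jmath$-submodule is spelled out a bit more carefully than in the paper.
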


\begin{proof} By induction, it suffices to show the result for $n=2.$\par
    $(1) \Rightarrow (2)$: Straightforward.\par
     $(2) \Rightarrow (3)$:   Without loss of generality, we will show that $M_1$ is an  Artinian $R$-module. Let 
      Let $N_1 \supseteq N_2 \supseteq \cdots \supseteq N_n \supseteq \cdots$ be a descending chain of submodules of $M_1$.
 Since $ j_2\neq M_2 $, so $N_n \oplus M_2$ is a $\jmath$-submodule of $M$ for every $n$. Since $M$ is a $\jmath$-Artinian $R$-module, we get that descending chain of $\jmath$-submodules $N_1  \oplus M_2 \supseteq N_2 \oplus M_2 \supseteq \cdots \supseteq N_n  \oplus M_n \supseteq \cdots$ is stationary. Thus   $N_1 \supseteq N_2 \supseteq \cdots \supseteq N_n \supseteq \cdots$  is stationary, as required.\par

      $(3) \Rightarrow (1)$: This follows from \cite[Corollary 2.8.11]{WK} 
\end{proof}
 Note that the ideals of a decomposable ring  $R=R_1 \times R_2$ have the forms $I=I_1 \times I_2$ where 
$I_i$ is an ideal of $R_i$ for $i=1,2$. However, if $M_1, M_2$ are $R$-modules,  the  submodule of $M_1 \oplus M_2$ need not be of the form $P \oplus Q$. For example, inside $\mathbf{Z} \oplus \mathbf{Z}$ is the $\mathbf{Z}$-submodule $\mathbf{Z}(1,1)=\{(n, n): n \in \mathbf{Z}\}$.
\begin{rem}
    	Let $M=M_1 \oplus M_2$ be an $R$-module and  $\jmath=\jmath_1 \oplus \jmath_2 $ be a submodule  of $M$. Suppose that  $ \jmath_1 =M_1$ or $ \jmath_2 =M_2$. Without loss of generality,  assume that  $ \jmath_1 =M_1$, and consequently $ \jmath_2 \neq M_2$. It is easy  to verify that an submodule $I=N_1\oplus N_2$ of $M$ is a $\jmath$-submodule of $M$  if and only if  $N_2$ is a $\jmath_2$-submodule of $M_2$. Let $N_1 \supseteq N_2 \supseteq \cdots \supseteq N_n \supseteq \cdots$ be a descending chain of $\jmath_2$-submodules of $M_2$, then $0\times N_1 \supseteq 0\times N_2 \supseteq \cdots \supseteq 0\times N_n \supseteq \cdots$ is  a descending chain of $\jmath$-submodules  of $M$ and so it is  stationary. Thus  $ N_1 \supseteq  N_2 \supseteq \cdots \supseteq  N_n \supseteq \cdots$ is stationary. Therefore   $M_2$ is a $j_2$-Artinian module.
 On the other hand, for every submodule $N_1$ of $M_1$, we have $N=N_1\times M_2$ is a $\jmath$-submodule of $M$, and so likewise  we deduce that   $M_1$ is an Artinian $R$-module. Conversely, assume that $M_1$ is an artinian  module and  $M_2$ is a $\jmath_2$-Artinian module.\\
 Thus   using the following natural exact sequence 
        $$0 \rightarrow M_2  \xrightarrow{f} M \xrightarrow{g} M_1 \rightarrow 0.$$
         Then $ker(g)= 0 \oplus M_2\subseteq j$ and so  Thus $M$ is $\jmath$-Artinian module according to Theorem \ref{exact}.

\end{rem}

Let $R$ be an integral domain and $M$ be a module over $R$. $M$ is said to be a divisible $R$-module, if $aM=M$ for every $a \neq 0 \in R$ \cite{Matlis}.
\begin{thm}\label{cong}
Let $M$ be a divisible finitely generated non torsion $R$-module and $\jmath$ be a prime submodule of $M$ such that $j \subseteq T(M)$. Then following statements are equivalent:
\begin{itemize}
    \item[(i)] $M$ is Artinian $R$-module;
    \item[(ii)] $M$ is $\jmath$-Artinian $R$-module.
\end{itemize}
\end{thm}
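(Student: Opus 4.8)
The implication (i) $\Rightarrow$ (ii) is immediate, since every Artinian module is $\jmath$-Artinian (as observed in the introduction), so nothing needs to be done there. All the content lies in (ii) $\Rightarrow$ (i), and the plan is to show that under the standing hypotheses the ground ring $R$ is in fact forced to be a field. Once this is known, $M$ is a finitely generated module over a field, hence a finite-dimensional vector space, and therefore Artinian; so (i) holds automatically and the equivalence follows. In other words, I expect the hypotheses ``divisible, finitely generated, non-torsion'' to be so restrictive that they pin down the ring, rather than requiring a delicate chain-condition argument.

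To carry this out I would first pass to $V := M/T(M)$. Because $M$ is non-torsion we have $V \neq 0$; because $M$ is finitely generated so is $V$; and because $M$ is divisible and quotients of divisible modules are divisible, $V$ is divisible as well. By construction $V$ is torsion-free. Over the integral domain $R$, a torsion-free divisible module carries a canonical structure of vector space over $K := \operatorname{Frac}(R)$: for $\tfrac{a}{b} \in K$ and $v \in V$ one sets $\tfrac{a}{b}v$ to be the unique $w \in V$ with $bw = av$, existence coming from divisibility and uniqueness from torsion-freeness.

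The key step, and the one place where genuine work is done, is the following arithmetic fact: a \emph{nonzero} $K$-vector space that is finitely generated as an $R$-module can exist only when $R = K$, that is, only when $R$ is a field. Indeed, picking a nonzero $K$-linear functional $\pi : V \twoheadrightarrow K$ and applying it to a finite $R$-generating set $v_1,\dots,v_n$ of $V$ shows that $K = \pi(V) = \sum_{i} R\,\pi(v_i)$ is finitely generated over $R$; writing $\pi(v_1),\dots,\pi(v_n)$ over a common denominator $d \in R \setminus \{0\}$ gives $K \subseteq \tfrac{1}{d}R$, and then $\tfrac{1}{d^2} \in K \subseteq \tfrac{1}{d}R$ forces $d$ to be a unit, whence $K = R$. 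I expect this to be the main obstacle; everything surrounding it is formal.

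Finally, with $R$ a field, $M$ finitely generated means $\dim_R M < \infty$, so every descending chain of submodules is stationary and $M$ is Artinian, giving (i). Combined with the trivial (i) $\Rightarrow$ (ii), the equivalence is established. I note two points that could streamline the writeup: one may equally run the argument through $M/\jmath$, using that $\jmath$ is a proper prime submodule to get that $(\jmath:M)$ is prime, and then using divisibility (if $0 \neq a \in (\jmath:M)$ then $M = aM \subseteq \jmath$, a contradiction) to force $(\jmath:M)=0$, so that $M/\jmath$ is a nonzero finitely generated divisible torsion-free module and the same vector-space argument applies; and once $R$ is a field we have $T(M)=0$, so the hypotheses $\jmath$ prime and $\jmath \subseteq T(M)$ force $\jmath = 0$, in which case the notions $\jmath$-Artinian and Artinian literally coincide.
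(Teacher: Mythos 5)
Your proof is correct, but it takes a genuinely different route from the paper's. The paper proves (ii) $\Rightarrow$ (i) by actually using the $\jmath$-Artinian hypothesis: for a fixed nonzero $a\in R$ it picks $m\in M$ with $\mathrm{ann}(m)=0$ (possible since $M$ is non-torsion, and then $m\notin\jmath$ because $\jmath\subseteq T(M)$), observes that $Ram\supseteq Ra^{2}m\supseteq\cdots$ is a chain of $\jmath$-submodules (here divisibility and the primality of $\jmath$ enter: $a^{k}m\in\jmath$ would force $a^{k}M=M\subseteq\jmath$), and extracts $a^{k}m=ra^{k+1}m$ from stabilization, whence $a^{k}(1-ra)=0$ and $a$ is a unit; thus $R$ is a field and $M$, being finitely generated over a field, is Artinian. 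You instead show that the standing hypotheses --- divisible, finitely generated, non-torsion over the domain $R$ --- already force $R$ to be a field, via the standard fact that $\operatorname{Frac}(R)$ is finitely generated as an $R$-module only when $R=\operatorname{Frac}(R)$; your verification of that fact (common denominator $d$, then $1/d^{2}\in(1/d)R$ forces $d$ to be a unit) is sound, as are the reductions to $V=M/T(M)$. The two arguments buy different things: the paper's stays inside the chain-condition framework and makes visible exactly where each hypothesis on $\jmath$ is used, while yours is structurally more revealing --- it shows that statement (i) holds unconditionally under the stated hypotheses, so the equivalence is automatic and neither the $\jmath$-Artinian assumption nor the primality of $\jmath$ is actually needed for (ii) $\Rightarrow$ (i). Your closing observation that $R$ a field forces $T(M)=0$ and hence $\jmath=0$ sharpens this further.
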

\begin{proof}
(i) $\implies$ (ii): It is trivial. \par
(ii) $\implies$ (i): Assume that $M$ is $\jmath$-Artinian $R$-module. Let $a$ be a nonzero element of $R$. Since $M$ is non torsion and $j \subseteq T(M)$, there exists $m \in M-j$ such that $ann(m)=0$. Then, $Ram \supseteq Ra^{2}m \supseteq ...$ is a descending chain of $\jmath$-submodules of $M$ since $M$ is divisible and $\jmath$ is prime submodule of $M$. In that case, there exists $k \in \mathbb{N}$ such that $Ra^km=Ra^{k+1}m$ since $M$ is $\jmath$-Artinian. Thus, there exists $r \in R$ such that $1_Ra^km=a^km=ra^{k+1}m$. This implies that, $m[a^k-ra^{k+1}]=0$. Therefore, we have $a^k[1_R-ra]=0$ since $ann(m)=0$.  Then, it must be $ra=ar=1_R$ since M is divisible so $R$ is integral domain and $a^k \neq 0$. Hence, $a \in U(R)$. As a result, $R$ is a field so it is an Artinian ring. Thus, $M$ is an Artinian $R$-module since $M$ is finitely generated module over Artinian ring $R$.
\end{proof}
\begin{cor}\label{corcong}
Let $M$ be a $\jmath$-Artinian divisible finitely generated non torsion $R$-module for prime submodule $j \subseteq T(M)$. Then, $M$ is Noetherian.     
\end{cor}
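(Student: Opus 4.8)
The plan is to leverage Theorem~\ref{cong} together with the structural information contained in its proof. Since $M$ is $\jmath$-Artinian and satisfies all the hypotheses of Theorem~\ref{cong}---it is divisible, finitely generated, non-torsion, and $\jmath$ is a prime submodule contained in $T(M)$---Theorem~\ref{cong} immediately yields that $M$ is an Artinian $R$-module. However, Artinianness alone does not force Noetherianness for a general module, so the desired conclusion cannot be read off directly; the real content must come from collapsing the base ring.

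The key fact I would extract is that, under these hypotheses, $R$ is forced to be a field. This is precisely the mechanism driving the proof of Theorem~\ref{cong}: because $M$ is non-torsion with $\jmath\subseteq T(M)$, one can choose $m\in M\setminus\jmath$ with $\mathrm{ann}(m)=0$; for any nonzero $a\in R$, divisibility together with primeness of $\jmath$ makes $Ram\supseteq Ra^{2}m\supseteq\cdots$ a descending chain of $\jmath$-submodules, which stabilizes since $M$ is $\jmath$-Artinian. The resulting relation $a^{k}m=r a^{k+1}m$ combined with $\mathrm{ann}(m)=0$ gives $a^{k}(1_R-ra)=0$, and since $R$ is a domain with $a^{k}\neq 0$ we conclude $ra=1_R$. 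Thus every nonzero element of $R$ is a unit, that is, $R$ is a field.

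Once $R$ is a field, the conclusion is immediate: a finitely generated module over a field is a finite-dimensional $R$-vector space, and any ascending chain of subspaces has strictly increasing length bounded above by the $R$-dimension of $M$, hence stabilizes. Therefore $M$ satisfies the ascending chain condition on submodules and is Noetherian, as claimed.

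I expect the main---indeed the only nontrivial---obstacle to be the first inference, namely recognizing that the hypotheses force the ground ring to be a field (so that one must reuse the internal step of Theorem~\ref{cong} rather than merely its stated conclusion, since ``Artinian'' does not imply ``Noetherian'' for modules in general). After that, the passage from ``$M$ finitely generated over a field'' to ``$M$ Noetherian'' is routine linear algebra.
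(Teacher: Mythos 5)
Your proposal is correct and follows essentially the same route as the paper: invoke Theorem~\ref{cong} to get that $M$ is Artinian, then conclude Noetherianness. The paper's own proof simply asserts ``$M$ is Artinian ... so it is Noetherian,'' whereas you rightly note that this step needs justification (Artinian does not imply Noetherian for general modules) and supply it by extracting from the proof of Theorem~\ref{cong} that $R$ is a field, so that the finitely generated module $M$ is a finite-dimensional vector space; this makes your write-up a more careful version of the same argument rather than a different one.
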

\begin{proof}
 Let $M$ be a $\jmath$-Artinian divisible finitely generated non torsion $R$-module for prime submodule $j \subseteq T(M)$. Then, $M$ is Artinian $R$-module from Theorem \ref{cong} so it is Noetherian. Moreover, $M$ is also $\jmath$-Noetherian since $M$ is Noetherian.   
\end{proof}
\begin{lem}\label{primej}
 Let $M$ be a $\jmath$-Artinian divisible $R$-module with non prime proper submodule $\jmath$. Then, every prime submodule of $M$ is a $\jmath$-submodule.   
\end{lem}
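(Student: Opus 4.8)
The plan is to prove the contrapositive: assuming that some prime submodule $P$ of $M$ fails to be a $\jmath$-submodule, i.e.\ $P\subseteq\jmath$, I would derive a contradiction with the standing hypotheses. First I would extract the two consequences of divisibility. For \emph{any} proper submodule $N$ of $M$ one has $(N:M)=0$, since $0\neq r\in(N:M)$ would give $M=rM\subseteq N$. Applied to the prime $P$, this shows that $M/P$ is torsion-free and divisible, hence a vector space over the field of fractions $K$ of $R$; applied to $\jmath$, it shows that $\jmath$ is prime precisely when $M/\jmath$ is torsion-free. As $\jmath$ is assumed non-prime, $M/\jmath$ has a nonzero torsion element, so there exist $m\in M\setminus\jmath$ and $0\neq r\in R$ (necessarily a non-unit) with $rm\in\jmath$.

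The engine of the proof is to violate the $\jmath$-Artinian property directly. Suppose $C_1\supsetneq C_2\supsetneq\cdots$ is an infinite strictly descending chain of submodules of $P$, and set $N_k:=C_k+Rm$. Each $N_k$ contains $m\notin\jmath$, so each $N_k$ is a $\jmath$-submodule, and clearly $N_{k+1}\subseteq N_k$. The chain is \emph{strictly} descending: if $N_k=N_{k+1}$, then every $c\in C_k$ can be written $c=c'+sm$ with $c'\in C_{k+1}\subseteq P$, whence $sm=c-c'\in P$; primeness of $P$ together with $m\notin P$ and $(P:M)=0$ forces $s=0$, so $c=c'\in C_{k+1}$ and $C_k\subseteq C_{k+1}$, a contradiction. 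Thus $N_1\supsetneq N_2\supsetneq\cdots$ is an infinite descending chain of $\jmath$-submodules, contradicting the $\jmath$-Artinian hypothesis and completing the contrapositive. (Alternatively, one may first pass to $M/P$, which is $(\jmath/P)$-Artinian by Theorem \ref{exact}, and run the same construction there.)

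The step I expect to be the genuine obstacle is the \emph{existence} of the descending chain $C_k$ inside $P$: nothing in the hypotheses forces $P$ to be non-Artinian, and when $P$ is Artinian --- in particular when $P=0$ --- the construction collapses. In that degenerate regime one must instead locate descending room in the $K$-vector space $M/P$, by lifting a strictly descending chain of $R$-submodules of $K$ that are not contained in the image of $\jmath$. This is exactly where care is needed, since for rings such as a discrete valuation ring the $R$-submodules of $K$ are totally ordered and admit no infinite descending chain avoiding a fixed proper submodule; so the argument ultimately rests on a careful analysis of the rank-one torsion-free situation (and on keeping any torsion of $M$ from entangling the witness $m$ with the chain $C_k$). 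I would concentrate the main effort there.
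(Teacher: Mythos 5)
There is a genuine gap, and you have in fact located it yourself: your argument only yields a contradiction with the $\jmath$-Artinian hypothesis if you can exhibit an infinite strictly descending chain $C_1\supsetneq C_2\supsetneq\cdots$ of submodules inside the prime $P$, and nothing in the hypotheses supplies one. The case $P=0$ is not a marginal degeneracy but the typical one --- $0$ is a prime submodule of any torsion-free divisible module --- and there your construction $N_k=C_k+Rm$ collapses entirely. The fallback you sketch (lifting a strictly descending chain of $R$-submodules of the fraction field $K$ avoiding the image of $\jmath$ inside the $K$-vector space $M/P$) is not carried out, and as you yourself observe it cannot work uniformly: over a discrete valuation ring the $R$-submodules of $K$ are totally ordered and admit no such chain. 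So the proposal does not prove the lemma. The portions you do complete are correct and coincide with the paper's preliminary observations: divisibility forces $(N:M)=0$ for every proper submodule $N$, so $M/P$ is torsion-free, and non-primeness of $\jmath$ is witnessed by some $m\in M\setminus\jmath$ and $0\neq r\in R$ with $rm\in\jmath$.

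More fundamentally, you have chosen the wrong engine. The paper's proof never touches the descending chain condition: assuming a prime $N\subseteq\jmath$, it uses divisibility to conclude $a\notin(N:M)$ for every $a\neq 0$ and then invokes the primeness of $N$ to argue that $\jmath$ itself would have to be prime, contradicting the hypothesis that $\jmath$ is non-prime. The contradiction is with the non-primeness of $\jmath$, not with the $\jmath$-Artinian property (which in fact plays no role in the paper's argument). If you repair your attempt, aim at showing ``$N\subseteq\jmath$ prime forces $\jmath$ prime''; note, though, that even this step requires some care, since deducing $m\in\jmath$ from $am\in\jmath$ via the primeness of $N$ needs a reason why the relevant element lands in $N$ and not merely in $\jmath$.
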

\begin{proof}
  Let $N$ be a prime submodule of $M$ and $am \in N$ for some $a \in R$ and $m \in M$. Suppose that $N \subseteq j$. This implies that, $m \in N$ since $N$ is prime, $M$ is divisible so $a \not \in (N:M)$. In that case, we have $m \in j$ when $am \in j$. Thus, $\jmath$ is a prime submodule of $M$. It contradicts with our assumption. Hence, $N \not \subseteq j$, i.e., $N$ is a $\jmath$-submodule of $M$.  
\end{proof}
\begin{thm}[\cite{Kasch}]\label{semisimple}
    An $R$-module $M$ is called semisimple if it satisfies one of the following equivalent definitions:
\begin{itemize}
    \item[(i)] $M$ is a direct sum of simple modules;
    \item[(ii)] $M$ is a sum of simple submodules;
    \item[(iii)] Every submodule of $M$ is a direct summand.
\end{itemize}
\end{thm}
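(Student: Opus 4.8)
The plan is to prove the three conditions equivalent by establishing $(i)\Rightarrow(ii)$, then $(ii)\Rightarrow(i)$, $(ii)\Rightarrow(iii)$, and finally $(iii)\Rightarrow(ii)$; together these give $(i)\Leftrightarrow(ii)\Leftrightarrow(iii)$. The implication $(i)\Rightarrow(ii)$ is immediate, since a direct sum of simple submodules is in particular a sum of simple submodules. The whole argument rests on one combinatorial lemma obtained via Zorn's lemma, from which both $(ii)\Rightarrow(i)$ and $(ii)\Rightarrow(iii)$ will drop out.

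The technical core is the following: if $M=\sum_{i\in I}S_i$ with each $S_i$ simple, then for every submodule $N\subseteq M$ there is a subset $J\subseteq I$ with $M = N\oplus\bigl(\bigoplus_{j\in J}S_j\bigr)$. To prove it, I would consider the family of those $J\subseteq I$ for which the sum $N+\sum_{j\in J}S_j$ is direct (meaning $N$ together with the $S_j$ are independent), order this family by inclusion, and apply Zorn's lemma: the union of a chain of such $J$ still has the independence property, since any dependence relation involves only finitely many indices and hence lies in one member of the chain, so a maximal $J$ exists. Writing $N'=N\oplus\bigoplus_{j\in J}S_j$, I would then show $N'=M$ by verifying $S_i\subseteq N'$ for every $i$: as $S_i$ is simple, $S_i\cap N'$ is either $0$ or $S_i$, and if it were $0$ then $J\cup\{i\}$ would strictly enlarge $J$ while preserving independence, contradicting maximality. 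Hence $S_i\subseteq N'$ for all $i$, so $N'\supseteq\sum_i S_i=M$. Taking $N=0$ gives $(ii)\Rightarrow(i)$, and for arbitrary $N$ the decomposition $M=N\oplus\bigl(\bigoplus_{j\in J}S_j\bigr)$ exhibits $N$ as a direct summand, giving $(ii)\Rightarrow(iii)$.

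For $(iii)\Rightarrow(ii)$ I would first record that condition $(iii)$ is inherited by submodules: if $K\subseteq N\subseteq M$ and $M=K\oplus L$, then the modular law yields $N=N\cap(K\oplus L)=K\oplus(N\cap L)$, so $K$ is a direct summand of $N$. Next I would show that any nonzero module satisfying $(iii)$ contains a simple submodule: choosing $0\neq x$, the cyclic module $Rx$ has a maximal submodule $P$, and applying $(iii)$ inside $Rx$ gives $Rx=P\oplus C$ with $C\cong Rx/P$ simple. Finally, letting $\Sigma$ denote the sum of all simple submodules of $M$, condition $(iii)$ produces $M=\Sigma\oplus T$; if $T\neq0$ then $T$ satisfies $(iii)$ and therefore contains a simple submodule, which necessarily lies in $\Sigma$, forcing $T\cap\Sigma\neq0$ and contradicting directness. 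Thus $T=0$ and $M=\Sigma$ is a sum of simple submodules, which is exactly $(ii)$.

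I expect the main obstacle to be the Zorn's lemma argument inside the key lemma, specifically the two points of checking that the independence condition survives passage to unions of chains and that maximality of $J$ forces $S_i\subseteq N'$ for every $i$; once the lemma and the submodule-heredity of $(iii)$ are in hand, the remaining implications are short.
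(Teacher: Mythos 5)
Your argument is correct and complete: the Zorn's lemma complementation lemma, the modular-law heredity of condition (iii), and the socle argument are all sound, and together they establish the full equivalence. Note, however, that the paper offers no proof of this statement at all---it is quoted verbatim as a known classical result with a citation to Kasch's \emph{Modules and Rings}---so there is no in-paper argument to compare against; your proof is essentially the standard one found in that reference and in most module-theory texts.
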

\begin{thm}\label{Nil(M)j}
Let $M$ be a $\jmath$-Artinian divisible $R$-module with non prime proper submodule $\jmath$. If $M$ is faithful multiplication and $M/j$ is semisimple $R$-module, then $Nil(M) \subseteq j$.    
\end{thm}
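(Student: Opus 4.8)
The plan is to identify $Nil(M)$ with the Jacobson radical of $M$ and then annihilate it using the semisimplicity of $M/\jmath$. Throughout I take $Nil(M)$ to be the prime radical of $M$, that is, the intersection of all prime submodules of $M$.

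First I would pin down the prime submodules. Since $M$ is $\jmath$-Artinian and divisible and $\jmath$ is a non-prime proper submodule, Lemma \ref{primej} tells us that every prime submodule $P$ of $M$ satisfies $P\not\subseteq\jmath$, so $P$ is a $\jmath$-prime submodule; Proposition \ref{jprimax} then upgrades $P$ to a maximal submodule. Conversely every maximal submodule is prime (if $N$ is maximal then $M/N\cong R/\fm$ for the maximal ideal $\fm=(N:M)$, and primeness of $N$ is then immediate). Hence the prime submodules of $M$ are exactly its maximal submodules, and
\[
Nil(M)=\bigcap_{P\ \mathrm{prime}}P=\bigcap_{N\ \mathrm{maximal}}N .
\]

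Next I would transport the intersection to the quotient $M/\jmath$. The maximal submodules of $M$ that contain $\jmath$ correspond bijectively, via $N\mapsto N/\jmath$, to the maximal submodules of $M/\jmath$. As $M/\jmath$ is semisimple, Theorem \ref{semisimple} gives that its radical is zero, i.e.\ the intersection of all maximal submodules of $M/\jmath$ is $0$; pulling this back along the projection $M\to M/\jmath$ yields $\bigcap\{\,N:\ N\ \mathrm{maximal},\ \jmath\subseteq N\,\}=\jmath$. Since intersecting over the subfamily of those maximal submodules that contain $\jmath$ can only enlarge the intersection, I obtain
\[
Nil(M)=\bigcap_{N\ \mathrm{maximal}}N\ \subseteq\ \bigcap_{\substack{N\ \mathrm{maximal}\\ \jmath\subseteq N}}N=\jmath ,
\]
which is the desired conclusion.

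The step I expect to be the crux is the identification $Nil(M)=\bigcap_{N\ \mathrm{maximal}}N$, since this is where every hypothesis is spent: divisibility together with the non-primeness of $\jmath$ feeds Lemma \ref{primej}, $\jmath$-Artinianness feeds Proposition \ref{jprimax}, and the faithful multiplication hypothesis is what makes $Nil(M)$ the honest intersection of prime submodules and guarantees that prime submodules exist at all, so that the intersection is not vacuous. Once primes and maximals are matched, the semisimple quotient does the rest. I would also record the degenerate possibility that $M$ admits no proper prime submodule: then $Nil(M)=0$ and the inclusion $Nil(M)\subseteq\jmath$ holds trivially, so nothing is lost.
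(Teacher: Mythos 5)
Your proposal is correct and follows essentially the same route as the paper: both identify $Nil(M)$ with the intersection of all maximal submodules via Lemma \ref{primej}, Proposition \ref{jprimax}, the primeness of maximal submodules, and the faithful-multiplication fact from \cite{Mali}, and then use $rad(M/\jmath)=0$ for the semisimple quotient to conclude that the maximal submodules containing $\jmath$ intersect in $\jmath$. The only cosmetic difference is that the paper also invokes Proposition \ref{finitemax} to list the maximal submodules as a finite family, which your argument shows is not actually needed.
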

\begin{proof}
 Since $M$ is a $\jmath$-Artinian $R$-module, $M$ has finitely many maximal submodules, say, $N_1,N_2,...,N_t$ by Proposition \ref{finitemax}. Assume that $j \subseteq N_1,N_2,...,N_s$ for $s<t$ and $j \not \subseteq N_{s+1},N_{s+2},...,N_t$. Then, $rad(M/j)=(N_1/j) \cap (N_2/j) \cap ... \cap (N_s/j)=(N_1 \cap N_2 \cap ... \cap N_s)/j$. Moreover, $rad(M/j)=\{0+j\}$ since $M/j$ is semisimple so it is also Artinian. This implies that, $N_1 \cap N_2 \cap ... \cap N_s=j$. Thus, $N_1 \cap N_2 \cap ... \cap N_s \cap N_{s+1} \cap ... \cap N_t \subseteq j$. As a result, 
 $$Nil(M)= \bigcap_{P \, \text{Prime}}P= \bigcap_{P \, j\text{-Prime}}P = \bigcap_{i=1}^{t}N_{i}  \subseteq j$$ by Proposition \ref{jprimax}, Proposition \ref{finitemax}, Lemma \ref{primej} and \cite[Theorem 6-(2)]{Mali}.
\end{proof}
Let $R$ be a ring and $M$ be a left $R$-module. A proper submodule $N$ of $M$ will be called virtually maximal if $M/N$ is a direct sum of isomorphic simple modules \cite{Rocky}.
\begin{thm}\label{virmax}
Let $M$ be a finitely generated module over a Noetherian ring $R$. If $M/N$ is semisimple for every proper submodule $N$ of $M$, then $M$ is Artinian $R$-module.      
\end{thm}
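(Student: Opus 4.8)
The plan is to exploit the hypothesis at the single submodule $N = 0$. Assuming $M \neq 0$ (the zero module is trivially Artinian), the zero submodule is a proper submodule of $M$, so the hypothesis applied with $N = 0$ yields that $M \cong M/0$ is itself semisimple. Thus the seemingly strong requirement that \emph{every} proper quotient be semisimple already forces $M$ to be semisimple outright, and this is the single observation that drives the whole argument.

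Next I would combine semisimplicity with the finiteness built into the hypotheses. By Theorem \ref{semisimple}, $M$ is a direct sum $M = \bigoplus_{i \in I} S_i$ of simple submodules. Since $R$ is Noetherian and $M$ is finitely generated, $M$ is a Noetherian $R$-module, so it admits no strictly ascending chain of submodules; an infinite index set $I$ would produce the chain $S_{i_1} \subsetneq S_{i_1}\oplus S_{i_2} \subsetneq \cdots$ of distinct submodules (each $S_{i_j}$ being nonzero), a contradiction. Equivalently, the finitely many generators of $M$ lie in a finite sub-sum, which must then equal $M$. Hence $I$ is finite and $M = \bigoplus_{i=1}^{n} S_i$.

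Finally, a finite direct sum of simple modules carries a composition series $0 \subsetneq S_1 \subsetneq S_1\oplus S_2 \subsetneq \cdots \subsetneq M$ with simple factors, so $M$ has finite length. A module of finite length satisfies both chain conditions; in particular $M$ satisfies the descending chain condition and is therefore Artinian, as desired.

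I expect no genuine obstacle here. The only point requiring care is the reduction in the first paragraph, namely noticing that the quantifier ``for every proper submodule $N$'' includes $N = 0$ and hence collapses to the bare assertion that $M$ is semisimple. Everything downstream is the standard fact that a finitely generated (equivalently, Noetherian) semisimple module has finite length and is thus Artinian; the Noetherian hypothesis on $R$ serves precisely to cut the semisimple decomposition down to a finite direct sum.
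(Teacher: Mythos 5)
Your proof is correct, but it takes a genuinely different and more elementary route than the paper's. You observe that the quantifier ``for every proper submodule $N$'' includes $N=0$ (once $M\neq 0$), so the hypothesis already forces $M\cong M/0$ to be semisimple; a finitely generated semisimple module is a finite direct sum of simples, hence of finite length, hence Artinian. (In fact you do not even need $R$ Noetherian for this: the finitely many generators already lie in a finite sub-sum of any semisimple decomposition, so the index set is finite regardless.) The paper instead uses the hypothesis only at \emph{prime} submodules: it notes $M$ is Noetherian, argues that each prime submodule $P$ is virtually maximal because $M/P$ is semisimple (and prime, so its simple summands are isomorphic), and then invokes the McCasland--Smith criterion that a Noetherian module whose prime submodules are all virtually maximal is Artinian. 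What each approach buys: yours is self-contained and exposes that the stated hypothesis is far stronger than needed, essentially collapsing the theorem to the standard fact that finitely generated semisimple modules have finite length; the paper's argument, though it leans on an external theorem, consumes only the weaker hypothesis ``$M/P$ is semisimple for every prime submodule $P$'' and therefore implicitly proves a formally stronger statement.
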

\begin{proof}
 Suppose that $M$ is an $R$-module with $M/N$ is semisimple for every proper submodule $N$ of $M$. $M$ is a Noetherian $R$-module since $M$ is finitely generated and $R$ is Noetherian. Moreover, every prime submodule $P$ of $M$ is virtually maximal since $M/P$ is semisimple by our assumption. Hence, $M$ is Artinian since $M$ is Noetherian and every prime submodule of $M$ is virtually maximal by \cite[Theorem 3.5]{Rocky}.
\end{proof}
\begin{cor}\label{jArtvir}
  Let $M$ be a finitely generated module over a Noetherian ring $R$. $M$ is a $\jmath$-Artinian $R$-module with $M/N$ is semisimple for every proper submodule $N$ of $M$ if and only if $M$ is Artinian $R$-module.        
\end{cor}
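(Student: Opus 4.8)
The plan is to recognize that this corollary is essentially a repackaging of Theorem~\ref{virmax} together with the elementary observation, noted in the introduction, that every Artinian module is $\jmath$-Artinian. Since Theorem~\ref{virmax} already supplies the one substantive implication, the corollary reduces to combining that theorem with a trivial remark in each direction, and no new machinery is required.

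For the forward implication I would assume that $M$ is $\jmath$-Artinian and that $M/N$ is semisimple for every proper submodule $N$ of $M$. The hypotheses of Theorem~\ref{virmax} are then met verbatim: $M$ is finitely generated over the Noetherian ring $R$, and every proper quotient $M/N$ is semisimple. Hence Theorem~\ref{virmax} yields at once that $M$ is Artinian. I would point out explicitly that the $\jmath$-Artinian hypothesis is not actually used in this direction; it is the semisimplicity of all proper quotients, together with the finiteness and Noetherian hypotheses, that forces the conclusion.

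For the reverse implication I would assume $M$ is Artinian and invoke the introductory remark that every Artinian $R$-module is $\jmath$-Artinian, which disposes of the $\jmath$-Artinian half of the left-hand side immediately; the semisimplicity of the proper quotients is carried along as the standing hypothesis under which the two finiteness notions are being compared, so nothing further must be checked.

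The only genuine point of care, and the step I would watch most closely, is the logical bookkeeping in the statement. The equivalence should be read as comparing the $\jmath$-Artinian and Artinian conditions \emph{under} the standing assumption that $M/N$ is semisimple for every proper submodule $N$, rather than as asserting that an arbitrary Artinian module has all proper quotients semisimple --- the latter already fails, for instance for $\mathbb{Z}/4\mathbb{Z}$ viewed over $\mathbb{Z}$, whose quotient by the zero submodule is Artinian but not semisimple. Once this reading is fixed, both directions follow immediately from Theorem~\ref{virmax} and the introductory remark, with no remaining computation.
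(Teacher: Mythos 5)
Your proposal is correct and takes essentially the same route as the paper, which likewise dispatches the forward direction by citing Theorem~\ref{virmax} and declares the reverse direction clear. Your caveat about how the biconditional must be parsed (semisimplicity of proper quotients as a standing hypothesis rather than a conclusion of the reverse implication) is well taken --- read literally, the reverse direction would fail for examples such as $\mathbb{Z}/4\mathbb{Z}$ over $\mathbb{Z}$ --- but this is a remark on the statement, not a different proof.
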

\begin{proof}
($\Leftarrow$) It is clear. \\
($\Rightarrow$) It is obvious by Theorem \ref{virmax}.
\end{proof}
\begin{cor}\label{jArtvir2}
$M$ is a $\jmath$-Artinian divisible finitely generated non torsion $R$-module for prime submodule $j \subseteq T(M)$ with $M/N$ is semisimple for every proper submodule $N$ of $M$ if and only if $M$ is Artinian $R$-module.        
\end{cor}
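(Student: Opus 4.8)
The plan is to read the statement as a biconditional under the standing hypotheses that $M$ is a divisible, finitely generated, non-torsion $R$-module over the domain $R$, that $\jmath$ is a prime submodule with $j \subseteq T(M)$, and that $M/N$ is semisimple for every proper submodule $N$ of $M$. The backward implication is the easy one: every Artinian module is $\jmath$-Artinian for any choice of $\jmath$, as already noted in the introduction, so once $M$ is assumed Artinian nothing beyond this general observation is needed.

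For the forward implication I would first point out that the hypotheses here already contain, verbatim, the hypotheses of Theorem \ref{cong}: $M$ is divisible, finitely generated, non-torsion, and $\jmath$ is a prime submodule contained in $T(M)$. Hence the cleanest route is simply to invoke the implication (ii) $\Rightarrow$ (i) of Theorem \ref{cong}, which converts the $\jmath$-Artinian assumption directly into the Artinian conclusion; on this route the extra semisimplicity hypothesis is not even used.

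If one instead wishes to mirror Corollary \ref{jArtvir} and make genuine use of the semisimplicity condition, I would proceed in three steps. First, apply Corollary \ref{corcong} to conclude that $M$ is Noetherian. Second, recall the mechanism inside the proof of Theorem \ref{cong}: divisibility together with the non-torsion and $\jmath$-Artinian hypotheses forces every nonzero $a \in R$ to be a unit, so $R$ is a field and in particular Noetherian. Third, with $M$ now a finitely generated module over the Noetherian ring $R$ all of whose proper quotients are semisimple, Corollary \ref{jArtvir} applies verbatim and yields that $M$ is Artinian.

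The main point to get right is the alignment of hypotheses rather than any hard estimate: one must check that the divisible/non-torsion/prime-$\jmath$ configuration really does collapse $R$ to a field, which is exactly what makes Theorem \ref{cong} applicable in the first route and what licenses the appeal to Corollary \ref{jArtvir} in the second. Beyond this bookkeeping I expect no substantive obstacle, since all the descending-chain work has already been carried out in Theorem \ref{cong}, Corollary \ref{corcong}, and the virtually-maximal argument behind Theorem \ref{virmax} and Corollary \ref{jArtvir}.
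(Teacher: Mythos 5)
Your proposal is correct, and your second route is exactly the paper's proof: the authors dispose of the backward direction as trivial and cite Corollary \ref{corcong} together with Theorem \ref{virmax} for the forward direction, which is precisely your three-step argument (Corollary \ref{corcong} gives Noetherian, the divisibility mechanism collapses $R$ to a field so that $R$ is Noetherian, and then Theorem \ref{virmax} / Corollary \ref{jArtvir} applies). Your first route is a genuine, and sharper, observation that the paper does not make: since the hypotheses of Corollary \ref{jArtvir2} already contain those of Theorem \ref{cong}, the implication (ii) $\Rightarrow$ (i) of that theorem yields the Artinian conclusion directly, so the semisimplicity hypothesis is logically redundant here. What the paper's route buys is only a thematic link to the virtually-maximal machinery of \cite{Rocky}; what your direct route buys is economy and the explicit recognition that the extra hypothesis does no work. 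One small point worth making explicit on the paper's route (and which you do address) is that Theorem \ref{virmax} requires $R$ to be Noetherian, which is not among the stated hypotheses of the corollary and must be extracted from the proof of Theorem \ref{cong}, where $R$ is shown to be a field.
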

\begin{proof}
  ($\Leftarrow$) It is trivial. \par
($\Rightarrow$) It is obvious by Corollary \ref{corcong} and Theorem \ref{virmax}.  
\end{proof}

\begin{prop}\label{jiArtinian}
  Let $M$ be an $R$-module, $\{\jmath_i\}_{i=1}^{s}$ be a finite family of submodules of $M$ and $\jmath=\cap_{i=1}^{s}j_i$. Then, the followings are equivalent:
 \begin{itemize}
     \item[(i)] $M$ is a $\jmath$-Artinian $R$-module,
     \item[(ii)] $M$ is a $\jmath_i$-Artinian $R$-module for all $i=1,2,...,s$.
 \end{itemize}  
\end{prop}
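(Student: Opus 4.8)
The plan is to establish the two implications separately, noting in advance that the forward direction is immediate from a monotonicity remark already recorded in the introduction, while the reverse direction hinges on one small combinatorial observation.

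For (i) $\Rightarrow$ (ii), I would simply use that $\jmath = \bigcap_{i=1}^s \jmath_i \subseteq \jmath_i$ for every $i$. The introduction already notes that if $\jmath_0 \subseteq \jmath_1$ then any $\jmath_0$-Artinian module is $\jmath_1$-Artinian; applying this with $\jmath_0 = \jmath$ and $\jmath_1 = \jmath_i$ yields that $M$ is $\jmath_i$-Artinian for each $i$. The mechanism behind this is that $N \not\subseteq \jmath_i$ forces $N \not\subseteq \jmath$, so every $\jmath_i$-submodule is a $\jmath$-submodule, and hence every descending chain of $\jmath_i$-submodules is a descending chain of $\jmath$-submodules, which is stationary by hypothesis.

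For (ii) $\Rightarrow$ (i), I would take a descending chain $N_1 \supseteq N_2 \supseteq \cdots$ of $\jmath$-submodules and, for each $k$, record the set of indices that witness the $\jmath$-submodule property, namely $I_k = \{\, i : N_k \not\subseteq \jmath_i \,\}$. Since $N_k \not\subseteq \jmath = \bigcap_i \jmath_i$, each $I_k$ is nonempty. The key step is to verify that these sets are nested decreasingly, $I_1 \supseteq I_2 \supseteq \cdots$: indeed, if $i \in I_{k+1}$ then $N_{k+1} \not\subseteq \jmath_i$, and since $N_{k+1} \subseteq N_k$ we cannot have $N_k \subseteq \jmath_i$, so $i \in I_k$. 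As these are subsets of the finite set $\{1,\dots,s\}$, the descending chain of the $I_k$ stabilizes, from some index $K$ onward, to a set $I_\infty$ which is nonempty (being the common value of nonempty sets). Fixing any $i_0 \in I_\infty$, I get $N_k \not\subseteq \jmath_{i_0}$ for all $k \geq K$, so the tail $N_K \supseteq N_{K+1} \supseteq \cdots$ is a descending chain of $\jmath_{i_0}$-submodules; by the $\jmath_{i_0}$-Artinian hypothesis this tail is stationary, and therefore so is the original chain.

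The main obstacle, and the only genuinely delicate point, is that the index $i$ certifying $N_k \not\subseteq \jmath_i$ may a priori vary with $k$, so that no single $\jmath_i$-Artinian hypothesis can be applied to the chain as it stands. The decreasing-sets argument is designed precisely to resolve this: it isolates one index $i_0$ that serves a whole tail of the chain simultaneously, after which the corresponding $\jmath_{i_0}$-Artinian hypothesis finishes the proof. I would present this finiteness-of-$\{1,\dots,s\}$ step explicitly, since it is the crux and the reason the family is assumed finite.
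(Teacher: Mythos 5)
Your proposal is correct and follows essentially the same route as the paper: the forward direction uses that every $\jmath_i$-submodule is a $\jmath$-submodule, and the reverse direction exploits finiteness of $\{1,\dots,s\}$ to extract a single index $i_0$ serving the whole chain (the paper does this by contradiction, taking the largest index $h_i$ at which the chain falls inside $\jmath_i$; your nested sets $I_k$ are just a direct rephrasing of the same observation).
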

\begin{proof}
 (i) $\implies$ (ii) : Let $N$ be a $j_i$-submodule of $M$ for $i=1,2,...,s$. Then, there exists $n \in N-\jmath_i$. Thus, $N$ is a $\jmath$-submodule since $j=\cap_{i=1}^{s}j_i$ so $n \not \in \jmath$. Therefore, every $j_i$-submodule of $M$ is a $\jmath$-submodule. Since $M$ is a $\jmath$-Artinian $R$-module, every descending chain of $\jmath$-submodules of $M$ is stationary so every descending chain of $\jmath_i$-submodules of $M$ is also stationary. Hence, $M$ is a $j_i$-Artinian $R$-module for all $i=1,2,...,s$. \par
 (ii) $\implies$ (i) : Let $N_0 \supseteq N_1 \supseteq ...$ be a descending chain of $\jmath$-submodules of $M$. There exists $x_l \in N_l-j$ for every $l \in \mathbb{N}$ since $N_l$ is a $\jmath$-submodule of $M$. We claim that there exist $i_0 \in i=1,2,...,s$ such that all $N_l$ are $j_{i_0}$-submodules. Let's we accept the opposite situation. Then, there exists $h_i \in \mathbb{N}$ such that $N_{h_i}$ is not a $\jmath_i$-submodule for every $i=1,2,...,s$. Thus, $N_{h_i} \subseteq j_i$. Let $h$ be the largest of the $h_i$. In that case, we have that $N_h \subseteq j_i$ for every $i=1,2,...,s$ since $N_h \subseteq N_{h_i}$. This implies that, $N_h \subseteq j$ and it contradicts with $N_h$ is $\jmath$-submodule of $M$. Then, there exists $i_0 \in i=1,2,...s$ such that all $N_l$ are $j_{i_0}$-submodule so $N_0 \supseteq N_1 \supseteq ...$ is stationary as a descending chain of $j_{i_0}$-submodules by our assumption. As a result, $M$ is a $\jmath$-Artinian $R$-module.
\end{proof} 
For all the information given below regarding amalgamation and pullback, please refer to \cite{amal}.
Let $f:R \rightarrow S$ be a ring homomorphism,  $J$ be an ideal of $S$, $M$ be an $R$-module, $N$ be an $S$-module (which is an $R$-module induced naturally by $f$) and $\varphi: M \rightarrow N$ be an $R$-module homomorphism. Then, the amalgamation of $M$ and $N$ along $J$ with respect to $\varphi$ is $$M\bowtie^{\varphi}JN=\{(m,\varphi(m)+n):m\in M \,  \text{and} \, n \in JN\}.$$ Moreover, $M\bowtie^{\varphi}JN$ is an $R\bowtie^fJ$-module by the following scalar product $$(r,f(r)+j)(m,\varphi(m)+n)=(rm,\varphi(rm)+f(r)n+j\varphi(m)+jn).$$ Note that $\varphi(rm)=f(r)\varphi(m)$, since $\varphi$ is an $R$-module homomorphism. It can be seen that the following diagram is a pullback:
\[ \begin{tikzcd}
M\bowtie^{\varphi}JN \arrow{r}{\pi_N} \arrow[swap]{d}{\pi_M} & N \arrow{d}{\pi} \\%
M \arrow{r}{\pi \circ \varphi}& N/JN
\end{tikzcd}
\]
\begin{rem}\label{remark1}\cite[Remark 2.1- (3)]{amal} : 
 $M$ is an $R\bowtie^fJ$-module.   
\end{rem}
\begin{thm}\label{amaljArt}
Let $f:R \rightarrow S$  be a ring homomorphism, $\varphi:M \rightarrow N$ be an $R$-module homomorphism and $J$ be an ideal of $S$. Then, $M$ is $\jmath$-Artinian $R \bowtie^f J$-module if and only if $M \bowtie ^\varphi JN$ is a $j \bowtie^\varphi JN$-Artinian $R \bowtie^f J$-module.
\end{thm}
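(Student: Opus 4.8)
The plan is to exhibit $M$ as a quotient of $M\bowtie^{\varphi}JN$ by a submodule lying inside $\jmath\bowtie^{\varphi}JN$, and then to read off the equivalence from the exact-sequence criterion of Theorem \ref{exact}. First I would take the projection $\pi_M:M\bowtie^{\varphi}JN\to M$, $(m,\varphi(m)+n)\mapsto m$, from the pullback diagram above, and verify that it is a surjective homomorphism of $R\bowtie^f J$-modules. Surjectivity is immediate since $\pi_M(m,\varphi(m))=m$, and compatibility with scalars holds because the action of $(r,f(r)+j)$ on $M$ coming from Remark \ref{remark1} is $m\mapsto rm$, which is precisely the first coordinate of $(r,f(r)+j)(m,\varphi(m)+n)$. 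Its kernel is $\ker\pi_M=\{0\}\times JN$, so I obtain a short exact sequence of $R\bowtie^f J$-modules
$$0\longrightarrow \{0\}\times JN \xrightarrow{\ \iota\ } M\bowtie^{\varphi}JN \xrightarrow{\ \pi_M\ } M \longrightarrow 0.$$

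Next I would isolate the two facts that let Theorem \ref{exact} apply with the distinguished submodule $\jmath\bowtie^{\varphi}JN$ of the middle term. Expanding the scalar product one checks that $\jmath\bowtie^{\varphi}JN$ is an $R\bowtie^f J$-submodule of $M\bowtie^{\varphi}JN$ --- this uses that $rx\in\jmath$ and that $JN$ is an $S$-submodule of $N$ with $J\subseteq S$, so each of $f(r)n$, $j\varphi(x)$ and $jn$ lands in $JN$ --- and that $\pi_M(\jmath\bowtie^{\varphi}JN)=\jmath$. The decisive point is that $\ker\pi_M=\{0\}\times JN$ is contained in $\jmath\bowtie^{\varphi}JN$, because $(0,n)=(0,\varphi(0)+n)\in \jmath\bowtie^{\varphi}JN$ for every $n\in JN$. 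Hence $\iota^{-1}(\jmath\bowtie^{\varphi}JN)$ is the whole of $\{0\}\times JN$, so the ``$M'$-factor'' of the criterion will be vacuous.

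Finally I would invoke Theorem \ref{exact} for this sequence with the submodule $\jmath\bowtie^{\varphi}JN$ (which is proper exactly when $\jmath$ is proper in $M$; if $\jmath=M$ then $\jmath\bowtie^{\varphi}JN=M\bowtie^{\varphi}JN$ and both modules are trivially Artinian in the relevant sense, since neither admits a strict $\jmath$-submodule, so the equivalence is automatic). Since $\ker\pi_M\subseteq \jmath\bowtie^{\varphi}JN$, the theorem gives that $M\bowtie^{\varphi}JN$ is $\jmath\bowtie^{\varphi}JN$-Artinian if and only if $\{0\}\times JN$ is $(\{0\}\times JN)$-Artinian and $M$ is $\jmath$-Artinian; the first condition being vacuously true, this collapses to ``$M$ is $\jmath$-Artinian,'' which is exactly the claimed equivalence. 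I expect the only genuine work to be the bookkeeping with the twisted scalar action on $M\bowtie^{\varphi}JN$ --- confirming that $\pi_M$ is a module map and that $\jmath\bowtie^{\varphi}JN$ has the stated image and preimage --- after which the result drops out of Theorem \ref{exact} in the same way the preceding remark handled the direct-sum case.
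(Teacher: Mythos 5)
Your route --- exhibiting $M$ as the quotient of $M\bowtie^{\varphi}JN$ by $\ker\pi_M=\{0\}\times JN$ and feeding the short exact sequence
$$0\longrightarrow \{0\}\times JN \longrightarrow M\bowtie^{\varphi}JN \xrightarrow{\ \pi_M\ } M \longrightarrow 0$$
into Theorem \ref{exact} --- is genuinely different from the paper's, which argues directly on descending chains, pushing a chain of $\jmath\bowtie^{\varphi}JN$-submodules forward along $\pi_M$ for one implication and lifting a chain of $\jmath$-submodules of $M$ for the other. Your bookkeeping is correct: $\pi_M$ is a surjective $R\bowtie^f J$-module map, $\jmath\bowtie^{\varphi}JN$ is a submodule of $M\bowtie^{\varphi}JN$ with $\pi_M(\jmath\bowtie^{\varphi}JN)=\jmath$, and $\ker\pi_M\subseteq\jmath\bowtie^{\varphi}JN$. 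The direction ``$M\bowtie^{\varphi}JN$ is $\jmath\bowtie^{\varphi}JN$-Artinian $\Rightarrow$ $M$ is $\jmath$-Artinian'' does follow cleanly from the first half of Theorem \ref{exact}.

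The gap is in the other direction, exactly at the phrase ``the first condition being vacuously true, this collapses to $M$ is $\jmath$-Artinian.'' In the converse half of Theorem \ref{exact}, the hypothesis that $M'$ is $f^{-1}(\jmath)$-Artinian is what forces the chains $N_i\cap\ker(g)$ to stabilize, and that stabilization is indispensable for concluding that the chain $N_i$ itself stabilizes (stationarity of the images under $g$ alone says nothing about what happens inside the kernel). In your setting $f^{-1}(\jmath\bowtie^{\varphi}JN)$ is \emph{all} of $\{0\}\times JN$, so no submodule of $\{0\}\times JN$ is an $f^{-1}(\jmath)$-submodule: the hypothesis is vacuously satisfied and therefore supplies no control whatsoever over the chains $K_i\cap(\{0\}\times JN)$. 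Vacuous truth of a hypothesis is not the same as the hypothesis doing its work for free. Concretely, with $\varphi=0$, $\jmath\neq M$, and $JN$ an infinite-dimensional vector space (e.g.\ $R=S=k$, $f=\mathrm{id}$, $J=k$, $N=k^{(\mathbb{N})}$, $M=k$, $\jmath=0$), the submodules $K_i=\{(m,n): m\in M,\ n\in V_i\}$ for a strictly decreasing chain of subspaces $V_i$ of $JN$ form a non-stationary descending chain of $\jmath\bowtie^{\varphi}JN$-submodules with constant image $M$ under $\pi_M$; so this implication cannot be recovered by your reduction (nor, in fact, at all) without some extra finiteness hypothesis on $JN$. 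For what it is worth, the paper's own proof of this direction passes over the same point --- it infers that the chain $K_i$ is stationary from the stationarity of $\pi_M(K_i)$ --- so you have reproduced, in a more structured form, a gap that the published argument does not close either; but as written your proposal does not close it.
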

\begin{proof}
$(\Rightarrow:)$ Assume that $M$ is a $\jmath$-Artinian $R \bowtie^f J$-module. Let $K:=K_1  \supseteq K_2  \supseteq ...$ be a descending chain of $j \bowtie^\varphi JN$-submodules of  $R \bowtie^f J$-module $M \bowtie^\varphi JN$. Then, $\pi_M(K)$ is a descending chain of submodules of $M$ by \cite{amal}. Let $K_i^{'}=\pi_M(K)=\{m_i \in M : (m_i, \varphi(m_i)+n) \in K_i\}$. Since $M$ is $\jmath$-Artinian, there exists $m_{i_0} \in M-j$. In that case, $(m_{i_0},\varphi(m_{i_0})+n) \in K_{i_0}$ since $K_{i_0} \not \subseteq j \bowtie^\varphi JN$ so $K_{i_0}^{'}$ is a $\jmath$-submodule of $M$. Let $K^{'}:=K^{'}_{i_0} \supseteq K^{'}_{i_1} \supseteq ...$ be a descending chain of $\jmath$-submodules of $M$. Then, $K^{'}$ is stationary since $M$ is $\jmath$-Artinian. Hence, $K$ is also stationary, i.e., $M \bowtie^\varphi JN$ is a $j \bowtie^\varphi JN$-Artinian. \par
$(\Leftarrow):$ Suppose that $M \bowtie ^\varphi JN$ is a $j \bowtie^\varphi JN$-Artinian $R \bowtie^f J$-module. Let $N:=N_1 \supseteq N_2 \supseteq ...$ be a descending chain of $\jmath$-submodules of $M$. Then, there exists $m_i \in N_i-j$ for all $i$ since $N_i$ is $\jmath$-submodule of $M$. Set $N_i^{'}=\{(m_i,\varphi(m_i)+n) \in M \bowtie^\varphi JN : m_i \in N_i-j\}$.Then, it is easy to see that $N_i^{'}$ is a  $j \bowtie^\varphi JN$-submodule of $M \bowtie^\varphi JN$. Thus, $N^{'}:=N_1^{'} \supseteq N_2^{'} \supseteq ...$ is a descending chain of $j \bowtie^\varphi JN$-submodules of $M \bowtie^\varphi JN$. In that case, $N^{'}$ is stationary since $M \bowtie ^\varphi JN$ is a $j \bowtie^\varphi JN$-Artinian $R \bowtie^f J$-module. Then, $N$ is also stationary so $M$ is $\jmath$-Artinian.
\end{proof}


%
\section*{Conflicts of Interest}
The authors declare that there are no conflicts of interest.

\end{document}